\newtheorem{defn}{Definition}[section]
\newtheorem{corollary}[defn]{Corollary}
\newtheorem{lemma}[defn]{Lemma}
\newtheorem{prop}[defn]{Proposition}
\newtheorem*{theorem*}{Theorem}
\theoremstyle{definition}
\newtheorem*{ack}{Acknowledgements}
\newtheorem{remark}[defn]{Remark}
\newtheorem{example}[defn]{Example}
\newcommand{\Q}{\mathbb Q}
\newcommand{\Z}{\mathbb Z}
\newcommand{\HH}{\mathbb H}
\newcommand{\SL}{\operatorname{SL}}
\newcommand{\PSL}{\operatorname{PSL}}
\newcommand{\Gal}{\operatorname{Gal}}
\newcommand{\Aut}{\operatorname{Aut}}
\newcommand{\GL}{\operatorname{GL}}
\begin{document}



\title[Genus formulas for families of modular curves]{Genus formulas for families of modular curves}

\author{Asimina S. Hamakiotes}
\address{Department of Mathematics, University of Connecticut, Storrs, CT 06269, USA}
\email{asimina.hamakiotes@uconn.edu} 
\urladdr{https://asiminah.github.io/}

\author{Jun Bo Lau}
\address{Department of Electrical Engineering, KU Leuven, Kasteelpark Arenberg 10/2452, 3001 Leuven (Heverlee), Belgium
}
\email{junbo.lau@kuleuven.be} 
\urladdr{}

\thanks{Lau was supported partly by the Simons Foundation grant \#550023 for the Simons Collaboration on Arithmetic Geometry, Number Theory, and Computation, and in part by the European Research Council (ERC) under the European Union’s
Horizon 2020 research and innovation programme (grant agreement ISOCRYPT - No. 101020788), 
by the Research Council KU Leuven grant C14/24/099 and by CyberSecurity Research Flanders with reference 
number VR20192203.}

\newcommand{\asimina}[1]{{\color{blue} \sf Asimina: [#1]}}
\newcommand{\jun}[1]{{\color{red} \sf Jun: [#1]}}

\begin{abstract} 
For each open subgroup $H\leq \GL_2(\widehat{\Z})$, there is a modular curve $X_H$, defined as a quotient of the full modular curve $X(N)$, where $N$ is the level of $H$. The genus formula of a modular curve is well known for $X_0(N)$, $X_1(N)$, $X(N)$, $X_{\mathrm{sp}}(N)$, $X_{\mathrm{ns}}(N)$, and $X_{S_4}(p)$ for $p$ prime. We explicitly work out the invariants of the genus formulas for $X_{\mathrm{sp}}^+(N)$, $X_{\mathrm{ns}}^+(N)$, and $X_{\text{arith},1}(M,MN)$. In Table \ref{tab:all_invariants}, we provide the invariants of the genus formulas for all of the modular curves listed. 


\end{abstract}


\maketitle

\section{Introduction}


Let $E$ be an elliptic curve defined over a number field $K$, with algebraic closure $\overline{K}$. For an integer $N\geq 2$, let $E[N]$ be the $N$-torsion subgroup of $E(\overline{K})$. Note that $E[N]$ is a free $\Z/N\Z$-module of rank $2$. The absolute Galois group $\Gal(\overline{K}/K)$ acts on $E[N]$ via the natural action on the coordinates of the points in $E[N]$, which induces the following Galois representation after fixing a basis of $E[N]$:
\[
\rho_{E,N}: \Gal(\overline{K}/K) \rightarrow \Aut(E[N]) \cong \GL_2(\Z/N\Z).
\]


For a subgroup $H \leq \GL_2(\Z/N\Z)$, the $K$-rational points of the modular curves $X_H(N)$ parametrize elliptic curves $E/K$ such that $\operatorname{im}\rho_{E,N}(\Gal(\overline{K}/K)) \subseteq H$. Classifying rational points on modular curves has been a subject of study for many years, e.g., Serre's Uniformity Problem \cite{serre1972proprietes}, Mazur's ``Program B'' \cite{mazur77rationalpoints}. Faltings Theorem tells us that for 
a smooth curve $C$ with genus $g\geq 2$, there are finitely many rational points on $C$. The genus of a curve $C$ allows us to determine the arithmetic of the rational points on $C$ and helps us decide what method to use to compute rational points. 


In the classification of possible $2$-adic and $\ell$-adic images of Galois representations attached to elliptic curves over $\Q$, Rouse, Sutherland, and Zureick-Brown developed computational tools to determine the genus of modular curves \cite{rouse2015elliptic,ell_adic}. More precisely, given an integer $N > 1$ and a subgroup type $H \leq \GL_2(\Z/N\Z)$, one could compute the generators of $H$, the index of the subgroup, number of elliptic points of order $2$ and $3$, and the number of cusps.

In this paper, we determine elementary genus formulas of modular curves associated to subgroups of $\GL_2(\Z/N\Z)$ such that they admit a model over $\Q$, without group-theoretic inputs. The list includes various families of interest which can be found on \cite{lmfdb} and contains the list from \cite{serre1972proprietes}. Below is a summary of the families of modular curves for which the genus formula is known, with references where one can find the genus formulas.  
\begin{enumerate}
    \item Let $N\geq 1$ be an integer. For the families $X_0(N), X_1(N)$, and $X(N)$, one can find the genus formulas worked out in \cite{Diamond2005,shimura1971}, among many other sources.

    \item Let $p$ be a prime and $r\geq 1$ an integer. In \cite[Table 3.1]{DLM2022} and references therein, one can find the invariants of the genus formulas for $X_{\mathrm{sp}}(p^r)$, $X_{\mathrm{sp}}^*(p^r)$, $X_{\mathrm{ns}}(p^r)$, and $X_{\mathrm{ns}}^*(p^r)$ (see Remark \ref{rem:plus_star} for details about the $X^*$ notation). Since the genus formula invariants are multiplicative \cite[Equation 3.14]{DLM2022}, one can determine the invariants of the genus formula for composite $N$ for $X_{\mathrm{sp}}(N)$, $X_{\mathrm{sp}}^*(N)$, $X_{\mathrm{ns}}(N)$, and $X_{\mathrm{ns}}^*(N)$, as given in Table \ref{tab:all_invariants} of this paper.

    \item Let $p$ be a prime such that $p \equiv \pm 3 \bmod 8$. In \cite{Ligozat}, Ligozat determined the genus formula of $X_{S_4}(p)$. Also, see \cite{serre1972proprietes,swinntertondyer73}.
\end{enumerate}
Note that in various literature, the notations $X^*$ and $X^+$ are used interchangeably but the modular curves and corresponding subgroups of $\GL_2(\Z/N\Z)$ are different (see Remark \ref{rem:plus_star}). 

The results of this paper are summarized in Table \ref{tab:all_invariants}, where we list the invariants of the genus formulas of all the modular curves discussed in this paper. In addition, the results of this work are published on the L-functions and Modular Forms Database (LMFDB) in the ``Modular curves over $\Q$'' section \cite{lmfdb}. There are pages for each family of modular curves in the database, where the invariants of the genus formula for that family are displayed. The methods used in this paper follow a general strategy that can be found in standard texts on modular forms \cite{Diamond2005,shimura1971}.


\subsection{Structure of the paper} 
In Section \ref{sec:background}, we review background relating to modular curves and the genus formula of a modular curve. 
In Section \ref{sec:split_cartan}, we determine the genus formulas for composite level $N$ for $X_{\mathrm{sp}}(N)$, $X_{\mathrm{sp}}^*(N)$, $X_{\mathrm{sp}}^+(N)$. In Section \ref{sec:nonsplit_cartan}, we determine the genus formulas for composite level $N$ for $X_{\mathrm{ns}}(N)$, $X_{\mathrm{ns}}^*(N)$, $X_{\mathrm{ns}}^+(N)$. In Section \ref{Xarith1}, we determine the genus formula for $X_{\mathrm{arith},1}(M,MN)$ and $X_{\mathrm{arith},\pm 1}(M,MN)$.



\begin{ack}
    The authors would like to thank Jennifer Balakrishnan, Harris Daniels, Jeremy Rouse, Andrew Sutherland, John Voight, and David Zywina for help with references. They would like to thank Pietro Mercuri for sharing his work on split Cartan subgroups. They would also like to thank the Simons Collaboration on Arithmetic Geometry, Number Theory, and Computation for funding the Modular Curves 3 Workshop hosted by MIT, where this work began. 
\end{ack}


\section{Background}\label{sec:background}

Let $N$ be a positive integer. There is a functor sending a $\Z[1/N]$-algebra $R$ to the set of (isomorphism classes of) pairs $(E,\phi)$ where $E$ is an elliptic curve over $R$ and $\phi: E[N]\rightarrow (\Z/N\Z)^2$ is an isomorphism of $R$-group schemes. If $N \geq 3$, then the functor is representable by a smooth affine $\Z[1/N]$-scheme, denoted by $Y(N)$. If $N <3$, we take the coarse moduli space to get a scheme. We denote by $X(N)$ the compactification of $Y(N)$ and call this the full modular curve of level $N$.

Every matrix $\gamma \in \GL_2(\Z/N\Z)$ is an automorphism of $(\Z/N\Z)^2_R$, with $\gamma$ acting on $(E,\phi)$ by sending it to $(E,\phi \circ \gamma)$. For a subgroup $H \leq \GL_2(\Z/N\Z)$, we denote by $X_H$ the quotient $X(N)/H$. As a coarse moduli space, $Y_H := Y(N)/H$ parametrizes elliptic curves with $H$-structure. More precisely, the equivalence class of elliptic curves with $H$-structure is given by $(E,\phi)$ where $E$ is an elliptic curve over a $\Z[1/N]$-scheme $R$ and $\phi:E[N] \rightarrow (\Z/N\Z)^2$ is an isomorphism of $R$-group schemes and $(E,\phi) \sim_H (E',\phi')$ if and only if $(\phi')^{-1} \circ \iota|_{E[N]} \circ \phi = h$ for some $h \in H$ and $\iota : E \xrightarrow[]{\sim} E'$.

On the other end, it is well known that for a subgroup $\Gamma \leq \SL_2(\Z)$ with finite index, one could define a complex structure on $\HH^*/\Gamma$, where $\HH^*$ denotes the union of the upper half plane and the cusps of $\Gamma$. In particular, $X_H$ defined above can be realized as a quotient of the upper half plane $\HH^*/\Gamma_H$ where $\HH^*$ is the union of the upper half plane and the point at infinity, and $\Gamma_H \leq \SL_2(\Z)$ is defined as the lift of $H \cap \SL_2(\Z/N\Z)$. 

Before stating the genus formula of a modular curve $X_H$, we will define the following invariants: 
\begin{itemize}
    \item The $\PSL_2$\textit{-index} of $X_H$, denoted $i(\Gamma_H)$, is the index of $\bar{H} \coloneqq \pm H \cap \SL_2(\Z/N\Z)$ in $\PSL_2(\Z/N\Z)$.  
    
    \item The \textit{number of elliptic points of order} 2 on $X_H$, denoted $\varepsilon_2(\Gamma_H)$, is the number of right cosets of $\Gamma_H$ in $\SL_2(\Z/N\Z)$ fixed by the right action of $\begin{psmallmatrix}0&-1\\1&0\end{psmallmatrix}$.

    \item The \textit{number of elliptic points of order} 3 on $X_H$, denoted $\varepsilon_3(\Gamma_H)$, is the number of right cosets of $\Gamma_H$ in $\SL_2(\Z/N\Z)$ fixed by the right action of $\begin{psmallmatrix}0&-1\\1&-1\end{psmallmatrix}$.

    \item The \textit{number of cusps} on $X_H$, denoted $\varepsilon_\infty(\Gamma_H)$, is the number of orbits of the right coset space $\Gamma_H \backslash \SL_2(\Z/N\Z)$ under the right action of $\begin{psmallmatrix}1&1\\0&1\end{psmallmatrix}$.
\end{itemize}

The Riemann-Hurwitz formula imples that the genus of the modular curve $X_H$ is given by:
\[
g(\Gamma_H) = 1 + \frac{i(\Gamma_H)}{12} -\frac{\varepsilon_2(\Gamma_H)}{4} - \frac{\varepsilon_3(\Gamma_H)}{3} - \frac{\varepsilon_\infty(\Gamma_H)}{2}.
\]
Common notation used in various genus formulas is: $\varphi(N)$ which is the usual Euler totient function, $\omega(N)$ which is the number of primes dividing $N$, and $\left(\frac{\cdot}{\cdot}\right)$ which is the usual Legendre symbol. 

\begin{remark}
Let $H \leq \GL_2(\Z/N\Z)$. If $-I \in H$, then $H$ is a coarse group. Otherwise, $H$ is a fine group. An index 2 or \textit{quadratic refinement} of $H$ is a subgroup $H' \leq H$ such that $H = \pm H'$. The modular curves $X_H$ and $X_{H'}$ are isomorphic as curves, but the moduli problem is refined from $H$ to $H'$. Naturally, the genus of $X_H$ and $X_{H'}$ are equal. In this paper, the genus formulas for $X_{\pm 1}(N)$, $X_{\mathrm{arith},\pm 1}(M,MN)$, and their quadratic refinements are included in Table \ref{tab:all_invariants}.
\end{remark}



\subsection{Modular curves associated to (extended) Cartan subgroups}\label{rem:Cartan_subgps}
    Let $R$ be a commutative ring. Given a free rank 2 etal\'{e} $R$-algebra $A$, for $a \in A^\times$, the multiplication-by-$a$ map defines an injective homomorphism $A^\times \hookrightarrow \GL_2(R)$. The image of this homomorphism is called a \textit{Cartan subgroup} of $\GL_2(R)$. There is a canonical involution of $A$ which gives another element of $\GL_2(R)$. The group generated by the image of $A^\times$ and the involution is called the \textit{extended Cartan subgroup}. In particular, when $R = \Z/N\Z$, this group is called the \textit{normalizer of Cartan subgroup} and it is denoted by either $C_{\mathrm{sp}}^+(N)$ or $C_{\mathrm{ns}}^+(N)$, depending on whether $A$ is split or nonsplit respectively at each prime dividing $N$. The Cartan subgroup has index $2$ in the extended Cartan subgroup. In this paper, the normalizer of the Cartan subgroup will mean the extended Cartan subgroup.

\begin{remark}\label{rem:plus_star}
    In the literature, sometimes the normalizer of the Cartan subgroup is defined as the group generated by the Cartan subgroup along with \textit{an involution for each prime dividing $N$}, denoted by $C_{\mathrm{sp}}^*$ or $C_{\mathrm{ns}}^*$, depending on whether $A$ is split or nonsplit respectively at each prime dividing $N$. In contrast with the previous paragraph, the Cartan subgroup here has index $2^{\omega(N)}$ in the group and is generated by the Cartan subgroup and involutions from each prime dividing $N$.

\end{remark}

Note that Remark \ref{rem:plus_star} also applies to modular curves $X_0(N)$ defined by Atkin-Lehner involutions: $X_0^+(N)$ and $X_0^*(N)$.

\begin{example}
    Consider the modular curve associated to the normalizer of a nonsplit Cartan subgroup of level $39$, $X_{\mathrm{ns}}^+(39)$. Following our notation, $X_{\mathrm{ns}}^+(39)$ correponds to the LMFDB label $\verb|39.468.28.d.1|$ with genus $28$. In \cite{BARAN20102753}, the curve with the same notation is defined as in Remark \ref{rem:plus_star} and has LMFDB label $\verb|39.234.13.a.1|$ and genus $13$.
\end{example}

\subsection{Modular curves associated to exceptional groups}\label{rem:X_S4}

For $p$ an odd prime, $X_{S_4}(p)$ is the modular curve $X_H$ for $H\leq \GL_2(\widehat{\Z})$ given by the inverse image of the subgroup of $\operatorname{PGL}_2(\Z/p\Z)$ isomorphic to $S_4$ (which is unique up to conjugacy). It parametrizes elliptic curves whose mod-$p$ Galois representation has projective image $S_4$, one of the three exceptional groups $A_4$, $A_5$, $S_4$ of $\operatorname{PGL}_2(p)$ that can arise as projective mod-$p$ images, and the only one that can arise for elliptic curves over $\Q$. The subgroup $H$ contains $-I$ and has surjective determinant when $p \equiv \pm 3 \bmod 8$, but not otherwise (see \cite{serre1972proprietes,swinntertondyer73}).

\subsection{Modular curves with full level structure }
    For $N\geq 1$ an integer, $X_{\mathrm{arith}}(N)$ is the modular curve $X_H$ for $H\leq \GL_2(\widehat{\Z})$ given by the inverse image of $\begin{psmallmatrix}1&0\\0&*\end{psmallmatrix} \leq \GL_2(\Z/N\Z)$. As a moduli space, $X_{\mathrm{arith}}$ parametrizes isomorphism classes of triples $(E,\phi,P)$, where $E$ is a generalized elliptic curve, $P$ is a point of exact order $N$, and $\phi \colon E \to E'$ is a cyclic $N$-isogeny such that $E[N]$ is generated by $P$ and $\ker \phi$. Alternatively, it parametrizes isomorphism classes of pairs $(E,\psi)$, where $E$ is a generalized elliptic curve and $\psi \colon \mu_N \times \Z/N\Z \to E[N]$ is a symplectic isomorphism. 

Note that $X_{\mathrm{arith}}(N)$ corresponds to the subgroup of matrices that intersects $\SL_2(\Z/N\Z)$ trivially and is a connected component of $X(N)$, 
\[
\begin{pmatrix}1&0\\0&*\end{pmatrix} \cap \SL_2(\Z/N\Z) = \begin{pmatrix}1&0\\0&1\end{pmatrix}. 
\]
The modular curve that this subgroup defines coincides with $X(N)$ as Riemann surfaces. Therefore, the genus formula for $X_{\mathrm{arith}}(N)$ is the same as the genus formula for $X(N)$.

\subsection{Modular curves with interesting torsion data}\label{subsec:Xarith1}
Let $E$ be an elliptic curve defined over a number field $K$ of degree $d$. It is known that the torsion subgroup $\#E_{\mathrm{tors}}(K)$ can be generated by two elements and is uniformly bounded in the degree of $K$ (see \cite{derickx2017torsion} and references therein). The modular curve $X_{\mathrm{arith},1}(M,MN)$ is defined by the inverse image of $\begin{psmallmatrix}1&M*\\0&*\end{psmallmatrix} \subset \GL_2(\Z/MN\Z)$. As a moduli space it parametrizes triples $(E,P,C)$, where $E$ is an elliptic curve over $k$, $P\in E[MN](k)$ is a point of order $MN$, and $C\leq E[M](k)$ is a $\Gal_k$-stable cyclic subgroup of order $M$ such that $E[M] = \langle NP \rangle + C$. The modular curve $X_{\mathrm{arith},1}(M,MN)$ is one of the quadratic refinements of $X_{\mathrm{arith},\pm 1}(M,MN)$, which is defined by the inverse image of $\pm \begin{psmallmatrix}1&M*\\0&*\end{psmallmatrix} \subset \GL_2(\Z/MN\Z)$.








\section{Split Cartan and normalizer of the split Cartan}\label{sec:split_cartan}

In this section, we further describe the modular curves $X_{\mathrm{sp}}(N)$, $X_{\mathrm{sp}}^+(N)$, and $X_{\mathrm{sp}}^*(N)$ from Section \ref{rem:Cartan_subgps} and Remark \ref{rem:plus_star}. We derive the invariants of their genus formulas for composite level $N$, using methods communicated by Pietro Mercuri for prime power levels $p^r$ in \cite[Remark 4.3]{dose2019double}.



The split Cartan, denoted $X_{\mathrm{sp}}(N)$, is the modular curve $X_H$ for the subgroup $H\leq \GL_2(\widehat{\Z})$ given by the inverse image of a Cartan subgroup $\begin{psmallmatrix}*&0\\0&*\end{psmallmatrix}$ that is split at every prime dividing $N$. As a moduli space it parametrizes triples $(E,C,D)$, where $E$ is an elliptic curve over $k$, and $C$ and $D$ are $\Gal_k$-stable cyclic subgroups such that $E[N](\overline{k}) \simeq C \oplus D$. 

The normalizer of the split Cartan, denoted $X_{\mathrm{sp}}^+(N)$, is the modular curve $X_H$ for the subgroup $H\leq \GL_2(\widehat{\Z})$ given by the inverse image of an extended Cartan subgroup $\begin{psmallmatrix}*&0\\0&*\end{psmallmatrix} \cup \begin{psmallmatrix}0&*\\ *&0\end{psmallmatrix} \subseteq \GL_2(\Z/N\Z)$ that is split at every prime dividing $N$. As a moduli space it parametrizes pairs $(E,\{C,D\})$, where $E$ is an elliptic curve over $k$, and $\{C,D\}$ is a $\Gal_k$-stable pair of cyclic subgroups such that $E[N](\overline{k}) \simeq C \oplus D$. Note that neither $C$ nor $D$ need be $\Gal_k$-stable. 


From the definition above, the orders of the split Cartan subgroup $C_{\mathrm{sp}}(N)$ and its normalizer $C_{\mathrm{sp}}^+(N)$ are $\varphi(N)^2$ and $2\cdot \varphi(N)^2$, respectively. Therefore, the indices are given by
\begin{align*}
    [\GL_2(\Z/N\Z):C_{\mathrm{sp}}(N)] &= N^2 \cdot \prod_{\substack{p \mid N, \\ p \text{ prime}}} \left(1 + \frac{1}{p}\right), \\
    [\GL_2(\Z/N\Z):C_{\mathrm{sp}}^+(N)] &= \frac{N^2}{2} \cdot \prod_{\substack{p \mid N, \\ p \text{ prime}}} \left(1 + \frac{1}{p}\right).
\end{align*}

\begin{lemma}\label{lem:Csp_cosets}
For prime power levels $N = p^r$, the coset representatives $\GL_2(\Z/N\Z)/C_{\mathrm{sp}}(N)$ have the form:
\[
\alpha(u,v) = \begin{pmatrix}
    1 + uv & u \\ v & 1
\end{pmatrix}, \ \beta(u,v) = \begin{pmatrix}
    u & -1 \\ 1 & pv
\end{pmatrix},
\]
where $u,v \in \Z/p^r\Z$. The coset representatives $\GL_2(\Z/N\Z)/C_{\mathrm{sp}}^+(N)$ are the same as the ones above under the following identifications:
\[
\alpha(u,v) \sim \begin{cases}
    \alpha((1 + uv)v^{-1},-v) & \text{if $v$ is invertible mod $p^r$}, \\
    \beta(u, -\frac{v}{p} (1 + uv)^{-1}) & \text{if $v$ is not invertible mod $p^r$.}
\end{cases}
\]

\end{lemma}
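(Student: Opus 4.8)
The plan is to realize the coset space $\GL_2(\Z/p^r\Z)/C_{\mathrm{sp}}(p^r)$ as a space of ordered pairs of lines, and then to exhibit the listed matrices as a complete, irredundant transversal. Since $C_{\mathrm{sp}}(p^r)$ is exactly the subgroup of diagonal matrices, it is the stabilizer of the ordered pair of coordinate lines $(\langle e_1\rangle,\langle e_2\rangle)$; hence $g\mapsto(\langle ge_1\rangle,\langle ge_2\rangle)$ identifies $\GL_2(\Z/p^r\Z)/C_{\mathrm{sp}}(p^r)$ with the set of ordered pairs $(L_1,L_2)$ of free rank-one summands of $(\Z/p^r\Z)^2$ that are complementary, i.e.\ $L_1\oplus L_2=(\Z/p^r\Z)^2$. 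Each line is a point of $\PP^1(\Z/p^r\Z)$, and I would split $\PP^1(\Z/p^r\Z)$ into the $p^r$ \emph{affine} lines $\langle(x,1)\rangle$ (second coordinate a unit) and the $p^{r-1}$ lines \emph{at infinity} $\langle(1,y)\rangle$ with $y\in p\Z/p^r\Z$.

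The first step is routine verification: $\det\alpha(u,v)=1$ and $\det\beta(u,v)=1+puv$ is a unit, so both families lie in $\GL_2(\Z/p^r\Z)$, and $\beta(u,v)$ depends only on $v\bmod p^{r-1}$. Reading off the columns, the coset of $\alpha(u,v)$ is the pair $(\langle(1+uv,v)\rangle,\langle(u,1)\rangle)$, whose second line is affine, while the coset of $\beta(u,v)$ is $(\langle(u,1)\rangle,\langle(1,-pv)\rangle)$, whose second line is at infinity. I would then fix $L_2$ and enumerate its complements. For the determinant-normalized first column $(1+uv,v)$ of $\alpha(u,v)$, distinct values of $v$ give distinct complementary lines $L_1$ (if two such columns were proportional, the proportionality constant would be forced to be $1$ by comparing determinants), producing $p^r$ distinct complements of each affine $L_2$; symmetrically, the $p^r$ affine lines $\langle(u,1)\rangle$ are distinct complements of a given line at infinity, while two lines at infinity are never complementary since their determinant is divisible by $p$. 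A short transitivity argument (as $\GL_2$ acts transitively on $\PP^1(\Z/p^r\Z)$ and preserves the complement relation) shows every line has the \emph{same} number of complements, and since the global index is $[\GL_2(\Z/p^r\Z):C_{\mathrm{sp}}(p^r)]=p^{2r}+p^{2r-1}$, that number is exactly $p^r$; hence the $p^{2r}$ matrices $\alpha(u,v)$ together with the $p^{2r-1}$ matrices $\beta(u,v)$ exhaust all cosets without repetition.

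For the normalizer I would write $w=\begin{psmallmatrix}0&1\\1&0\end{psmallmatrix}$, so that $C_{\mathrm{sp}}^+(p^r)=C_{\mathrm{sp}}(p^r)\sqcup C_{\mathrm{sp}}(p^r)w$ and therefore $gC_{\mathrm{sp}}^+(p^r)=gC_{\mathrm{sp}}(p^r)\sqcup gwC_{\mathrm{sp}}(p^r)$. Passing from $C_{\mathrm{sp}}(p^r)$-cosets to $C_{\mathrm{sp}}^+(p^r)$-cosets thus amounts to identifying each ordered pair $(L_1,L_2)$ with its swap $(L_2,L_1)$, which on representatives is right multiplication by $w$, i.e.\ swapping the two columns. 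The final step is to put $\alpha(u,v)w=\begin{psmallmatrix}u&1+uv\\1&v\end{psmallmatrix}$ back into standard form by rescaling columns: its column lines are $\langle(u,1)\rangle$ and $\langle(1+uv,v)\rangle$, and the case split is governed by whether the new second line is affine or at infinity. When $v$ is invertible, $\langle(1+uv,v)\rangle$ is affine and solving the determinant normalization forces $\alpha(u,v)\sim\alpha((1+uv)v^{-1},-v)$; when $v$ is not invertible, $1+uv$ is a unit so $\langle(1+uv,v)\rangle$ lies at infinity, and matching against $\beta(u',v')$ gives $u'=u$ and $pv'=-v(1+uv)^{-1}$, i.e.\ $\alpha(u,v)\sim\beta(u,-\tfrac{v}{p}(1+uv)^{-1})$.

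I expect the main obstacle to be the exhaustiveness and irredundancy bookkeeping: cleanly separating the affine and infinite strata of $\PP^1(\Z/p^r\Z)$, tracking the $v\bmod p^{r-1}$ redundancy in the $\beta$ family, and confirming via the counting argument that the listed matrices hit each of the $p^{2r}+p^{2r-1}$ cosets exactly once. By contrast, the computations identifying $\alpha(u,v)w$ with a standard representative are mechanical once the correct case distinction on the invertibility of $v$ is in place.
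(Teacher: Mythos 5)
Your proposal is correct, but it proves the lemma by a genuinely different route than the paper. The paper's own proof is a terse direct verification: one observes that the list has the right size ($p^{2r}$ matrices $\alpha(u,v)$ plus $p^{2r-1}$ matrices $\beta(u,v)$, since $\beta$ depends only on $v \bmod p^{r-1}$, matching the index $p^{2r}+p^{2r-1}$) and then checks by brute-force matrix computation that $\alpha(u',v')^{-1}\alpha(u,v)$, $\beta(u',v')^{-1}\beta(u,v)$, $\alpha(u',v')^{-1}\beta(u,v)$ lie in $C_{\mathrm{sp}}(N)$ (resp.\ $C_{\mathrm{sp}}^+(N)$) only under the stated congruences, with the normalizer identification absorbed into the same check. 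You instead exploit the moduli interpretation: $C_{\mathrm{sp}}(p^r)$ is the stabilizer of the ordered pair of coordinate lines, so cosets are ordered pairs of complementary unimodular lines in $(\Z/p^r\Z)^2$; stratifying $\PP^1(\Z/p^r\Z)$ into $p^r$ affine lines and $p^{r-1}$ lines at infinity matches the $\alpha$'s (second line affine) and $\beta$'s (second line at infinity) to disjoint strata, and your transitivity-plus-index argument pinning the number of complements of each line at exactly $p^r$ settles exhaustiveness and irredundancy simultaneously. The $C_{\mathrm{sp}}^+$-identification then falls out conceptually as the swap $(L_1,L_2)\mapsto(L_2,L_1)$, i.e.\ right multiplication by the involution, and I have checked your two case computations: $\alpha\big((1+uv)v^{-1},-v\big)$ has first column $-v\,(u,1)^{\mathsf T}$ and second column spanning $\langle(1+uv,v)\rangle$, and the at-infinity matching indeed forces $u'=u$, $pv'=-v(1+uv)^{-1}$; since the swap is an involution with no fixed pairs and every $\beta$ arises from a unique $\alpha$ with $p\mid v$, describing it on the $\alpha$ side, as the lemma does, suffices. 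What each approach buys: the paper's check is shorter and needs no setup, while yours explains \emph{why} the representatives take this shape (normalized bases adapted to a line pair), makes the $v\bmod p^{r-1}$ redundancy in the $\beta$ family explicit — which is in fact needed for the count to match the index, a point the lemma's statement with $v\in\Z/p^r\Z$ glosses over — and generalizes painlessly, e.g.\ to the CRT product decomposition used immediately after the lemma.
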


\begin{proof}
    One checks that the number of coset representatives is equal to the $\GL_2$-index and that $\alpha(u',v')^{-1} \alpha(u,v), \beta(u',v')^{-1} \beta(u,v), \alpha(u',v')^{-1}\beta(u,v) \in C_{\mathrm{sp}}(N)$ or $C_{\mathrm{sp}}(N)$ if and only if $u' \equiv u \pmod{N}$ and $v' \equiv v \pmod{N}$. 
\end{proof}

The Chinese Remainder Theorem implies that there is a bijection:
\[
\GL_2(\Z/N\Z)/C_{\mathrm{sp}}(N) \rightarrow \prod_{p} \GL_2(\Z/p_i^{e_i}\Z)/C_{\mathrm{sp}}(p_i^{e_i}).
\]
It follows that the coset representatives on the left are lifts of tuples of $\alpha$'s and $\beta$'s on the right. Let $\omega := \begin{psmallmatrix}
    0 & 1 \\ 1 & 0
\end{psmallmatrix} \in \GL_2(\Z/N\Z)$ be the involution of the split Cartan subgroup. Then the subgroup generated by $ \omega$ acts on $\GL_2(\Z/N\Z)/C_{\mathrm{sp}}(N)$ by left multiplication, which induces a natural action of $\vec{\omega}:= (\omega,\omega, \ldots, \omega)$ on $\prod_{p} \GL_2(\Z/p_i^{e_i}\Z)/C_{\mathrm{sp}}(p_i^{e_i})$, giving the following bijection:
\[
\GL_2(\Z/N\Z)/C_{\mathrm{sp}}^+(N) = \Big(\GL_2(\Z/N\Z)/C_{\mathrm{sp}}(N)\Big)/\langle \omega \rangle \rightarrow \Big(\prod_{p} \GL_2(\Z/p_i^{e_i}\Z)/C_{\mathrm{sp}}(p_i^{e_i})\Big)/\langle \vec{\omega} \rangle.
\]
The identification in Lemma \ref{lem:Csp_cosets} extends to the above bijection, thereby identifying pairs of tuples consisting of $\alpha$'s and $\beta$'s for each prime dividing the level.

With this setup, we can begin calculating the quantities $\varepsilon_2, \varepsilon_3$, and $\varepsilon_\infty$ in the genus formula.

\begin{prop}\label{prop:norm_sp_infty}
    Let $\varepsilon_\infty$ and $\varepsilon_\infty^+$ denote the number of cusps in $X_{\mathrm{sp}}(N)$ and $X_{\mathrm{sp}}^+(N)$, respectively. Then we have that
    \begin{align*}
        \varepsilon_\infty &= N\cdot \prod_{p\mid N, \text{ prime}} \left(1 + \frac{1}{p}\right), \\
        \varepsilon_\infty^+ &= \varepsilon_\infty/2.
    \end{align*}
\end{prop}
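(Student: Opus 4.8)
The plan is to count cusps via the action of $T = \begin{psmallmatrix}1&1\\0&1\end{psmallmatrix}$ on the coset space $\Gamma_{\mathrm{sp}}\backslash\SL_2(\Z/N\Z)$, exploiting the explicit coset representatives $\alpha(u,v),\beta(u,v)$ from Lemma~\ref{lem:Csp_cosets}. By the Chinese Remainder Theorem it suffices to treat prime power level $N=p^r$ and then take the product over primes dividing $N$; the final formula $\varepsilon_\infty = N\prod_{p\mid N}(1+1/p)$ is visibly multiplicative, matching the multiplicativity of the $\GL_2$-index. First I would reduce the cusp count for $X_{\mathrm{sp}}$ to orbit-counting for the right action of $T$ (equivalently, conjugation on representatives) on the $N^2\prod(1+1/p)$ cosets. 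For prime power level, I would partition the representatives into the $\alpha$-type (with $v$ invertible) and the $\beta$-type (with $pv$ non-invertible) and track how right multiplication by $T$ moves each representative back into the normal form, reading off the orbit lengths.

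Concretely, for a representative $\gamma$ I would compute $\gamma T$ and renormalize it back to $\alpha(u',v')$ or $\beta(u',v')$ using the identification in Lemma~\ref{lem:Csp_cosets}, thereby obtaining an explicit permutation on the index set $\{(u,v)\}$. The key observation I expect is that the second column (the ``$v$'' data, governing the denominator structure) is preserved or shifted in a controlled way, so that $T$ acts by translation $u\mapsto u + (\text{something depending on }v)$ on each fiber of fixed $v$. Summing the number of $T$-orbits over all fibers should give $\varepsilon_\infty = N\prod_{p\mid N}(1+1/p)$; the total count $N^2\prod(1+1/p)$ divided by the typical orbit length $N$ is consistent with this, and I would verify that the exceptional fibers (where $v$ is non-invertible) contribute the correct adjustment so that the clean product formula emerges.

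For the normalizer $X_{\mathrm{sp}}^+$, the plan is to descend the $T$-action to the quotient by $\langle\omega\rangle$ and show $\varepsilon_\infty^+ = \varepsilon_\infty/2$. The cleanest route is to observe that $\omega = \begin{psmallmatrix}0&1\\1&0\end{psmallmatrix}$ commutes suitably with the cusp-counting structure: the involution pairs up the $T$-orbits on $\Gamma_{\mathrm{sp}}\backslash\SL_2(\Z/N\Z)$ \emph{freely}, i.e. no $T$-orbit of cusps is fixed by $\omega$. If $\omega$ acts without fixed orbits on the set of cusps of $X_{\mathrm{sp}}$, then the cusps of $X_{\mathrm{sp}}^+$ are exactly the $\langle\omega\rangle$-orbits, each of size $2$, yielding $\varepsilon_\infty^+ = \varepsilon_\infty/2$. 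I would establish freeness by a direct check that $\omega$ conjugated against $T$ sends any cusp to a genuinely different $T$-orbit, using that $\omega T\omega = \begin{psmallmatrix}1&0\\1&1\end{psmallmatrix}$ is lower- rather than upper-triangular, so a cusp and its $\omega$-image have incompatible stabilizer directions.

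The main obstacle I anticipate is the bookkeeping in the non-invertible-$v$ (the $\beta$-type) stratum: renormalizing $\beta(u,v)T$ back to standard form invokes the inverse $(1+uv)^{-1}$ appearing in Lemma~\ref{lem:Csp_cosets}, and one must carefully handle the $p$-adic valuation of $v$ to confirm the orbit lengths and ensure every coset is counted exactly once. The freeness of the $\omega$-action in the $X^+$ case is the second delicate point: while generically clear, one should rule out small-level or $2$-torsion coincidences where a cusp might accidentally be $\omega$-stable, which would break the clean halving. Once these two strata are handled cleanly at prime power level, assembling the global formula via CRT and the induced $\vec\omega$-action is routine.
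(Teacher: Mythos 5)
Your overall route --- counting $T$-orbits on the coset space using the explicit representatives of Lemma~\ref{lem:Csp_cosets} --- is legitimate and genuinely different from the paper, which avoids the orbit bookkeeping entirely: it shows that $\begin{psmallmatrix}1&a\\0&1\end{psmallmatrix}\in g^{-1}\Gamma_{\mathrm{sp}}(N)g$ forces $a\equiv 0\pmod N$, so by \cite[Prop 1.37]{shimura1971} \emph{every} cusp has ramification index exactly $N$ over $X(1)$, and $\varepsilon_\infty$ is just the degree $N^2\prod_{p\mid N}(1+1/p)$ divided by $N$. But as written your proposal has a genuine gap at precisely this point: you never establish that every $T$-orbit has length exactly $N$. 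Your consistency check (``total count divided by the typical orbit length $N$'') is circular --- if even one orbit were shorter, the clean product formula would fail --- and your anticipated ``adjustment from exceptional fibers'' signals that the computation in the $\beta$-stratum has not been carried out. The content you would need to extract from the renormalization is exactly the paper's one-line observation that no nontrivial power $T^a$ with $N\nmid a$ lies in any conjugate of $\Gamma_{\mathrm{sp}}(N)$, i.e., that there are no exceptional fibers at all; without that, the count is unproven.

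The second gap is more serious: your claimed freeness of the $\omega$-action on cusps is \emph{false} in general, and your proposed justification would not detect the failure. At $N=2$ one has $\varepsilon_\infty = 2\cdot\tfrac32 = 3$, which is odd, so $\omega$ must fix a cusp; indeed $X_{\mathrm{sp}}^+(2)$ has $2$ cusps, not $3/2$, and this is exactly the case the paper treats separately by direct computation. Your heuristic --- that $\omega T\omega = \begin{psmallmatrix}1&0\\1&1\end{psmallmatrix}$ is lower-triangular, hence ``incompatible stabilizer directions'' --- is not a proof of freeness for any $N$: the relevant condition is whether some coset $g$ satisfies $C_{\mathrm{sp}}'\,\omega' g\, T^k = C_{\mathrm{sp}}'\, g$ for an element $\omega'$ of the normalizer, and conjugating $T$ by $\omega$ alone says nothing about this as $g$ varies; applied uncritically, your argument would ``prove'' freeness at $N=2$ as well, contradicting the count above. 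The paper instead asserts (and uses) that the degree-$2$ map $X_{\mathrm{sp}}(N)\to X_{\mathrm{sp}}^+(N)$ is unramified over the cusps for $N>2$, which is the statement you must actually verify coset-by-coset if you want the orbit-theoretic proof to go through; you correctly flagged small levels as a worry, but the flag needs to become a computation, and the $N=2$ exception needs to be carved out of the halving claim.
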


\begin{proof}
    Note that $\SL_2(\Z)_\infty = \langle \begin{psmallmatrix}
        1 & 1 \\ 0 & 1
    \end{psmallmatrix}\rangle$. Let $g \in \SL_2(\Z)/\Gamma_{\mathrm{sp}}(N)$ be such that $\begin{psmallmatrix}
        1 & a \\ 0 & 1
    \end{psmallmatrix} \in g^{-1}\Gamma_{\mathrm{sp}}(N)g$. Observe that $a \equiv 0 \pmod{N}$. By \cite[Prop 1.37]{shimura1971}, the ramification index for each cusp is $N$ and since the map $X_{\mathrm{sp}}(N) \rightarrow X(1)$ has degree $N^2 \cdot \prod_p (1 + 1/p)$, we have $\varepsilon_\infty = N \cdot \prod_p (1 + 1/p)$. Furthermore, the morphism $X_{\mathrm{sp}}(N) \rightarrow X_{\mathrm{sp}}^+(N)$ is of degree $2$ and unramified over the cusps. It follows that $\varepsilon_\infty^+ = \varepsilon_\infty/2$. For $ N = 2$, one could work directly with the definition of $\varepsilon_2^+$ as the number of cosets in $\SL_2(\Z/2\Z)$ fixed by the right action of  $\begin{psmallmatrix}
        0 & -1 \\ 1 & 0 
    \end{psmallmatrix}$ and find that there are $2$ cusps. 
\end{proof}

\begin{prop}\label{prop:norm_sp_nu3}
    Let $\varepsilon_3$ and $\varepsilon_3^+$ denote the number of elliptic points of order $3$ in $X_{\mathrm{sp}}(N)$ and $X_{\mathrm{sp}}^+(N)$, respectively. Then,
    \begin{align*}
    \varepsilon_3 &= \begin{cases}
        0 & \text{if $2\mid N$ \text{or} $3 \mid N$}, \\ \prod\limits_{p\mid N} \left(1 + \left(\frac{-3}{p}\right) \right) & \text{otherwise.}
    \end{cases} \\
    \varepsilon_3^+ &= \varepsilon_3/2.
\end{align*}

\end{prop}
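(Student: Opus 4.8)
The plan is to compute $\varepsilon_3$ by counting cosets fixed under the order-three element $s_3 := \begin{psmallmatrix}0&-1\\1&-1\end{psmallmatrix}$, reduce to prime-power level, run an eigenvalue analysis, and then pass to the normalizer by a degree-two covering argument. First I would reduce to prime powers. Under the Chinese Remainder Theorem bijection $\GL_2(\Z/N\Z)/C_{\mathrm{sp}}(N) \to \prod_p \GL_2(\Z/p^{e}\Z)/C_{\mathrm{sp}}(p^{e})$ set up before the proposition, the element $s_3$ reduces to $s_3$ at each prime and so acts componentwise as $(s_3,\dots,s_3)$; a tuple is fixed if and only if each component is fixed, whence the fixed-coset count is multiplicative and it suffices to compute $\varepsilon_3(p^r)$. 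Since $\det$ is surjective on $C_{\mathrm{sp}}$, the coset spaces $\bar{H}\backslash \SL_2(\Z/p^r\Z)$ and $C_{\mathrm{sp}}(p^r)\backslash\GL_2(\Z/p^r\Z)$ are identified, and a coset represented by $g$ is fixed precisely when $s_3$ is conjugated by $g$ into $C_{\mathrm{sp}}(p^r)$, i.e. when $g s_3 g^{-1}$ is diagonal (the count is unaffected by replacing $g$ with $g^{-1}$, matching either the left cosets of Lemma \ref{lem:Csp_cosets} or the right cosets of the definition).

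Next I would carry out the eigenvalue analysis. The characteristic polynomial of $s_3$ is $\lambda^2+\lambda+1$, whose roots are the primitive cube roots of unity $\zeta,\zeta^2$, so $s_3$ is diagonalizable into $C_{\mathrm{sp}}(p^r)$ exactly when these roots lie in $\Z/p^r\Z$, equivalently when $\lambda^2+\lambda+1$ splits there; by Hensel's lemma this is equivalent to splitting over $\F_p$. For $p=2$ the polynomial is irreducible over $\F_2$; for $p=3$ it is $(\lambda-1)^2$ and $s_3 \bmod 3$ is non-semisimple, hence not conjugate to any diagonal matrix; for odd $p\neq 3$ it splits iff $-3$ is a square mod $p$, i.e. iff $\left(\frac{-3}{p}\right)=1$, i.e. iff $p\equiv 1\pmod 3$. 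When it splits, $\zeta\neq\zeta^2$ in $\F_p$ forces $s_3$ to be regular semisimple over $\Z/p^r\Z$, its centralizer is $C_{\mathrm{sp}}(p^r)$, and the transporter $\{g : g s_3 g^{-1}\ \text{diagonal}\}$ is a torsor under the normalizer, a union of $[C_{\mathrm{sp}}^+(p^r):C_{\mathrm{sp}}(p^r)]=2$ cosets. Thus $\varepsilon_3(p^r)=2=1+\left(\frac{-3}{p}\right)$ when $p\equiv 1\pmod 3$, and $\varepsilon_3(p^r)=0$ when $p\in\{2,3\}$ or $p\equiv 2\pmod 3$ (the last case being $1+\left(\frac{-3}{p}\right)=0$). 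Multiplying over the primes dividing $N$ gives the stated formula, the vanishing factors forcing $\varepsilon_3=0$ whenever $2\mid N$, $3\mid N$, or some $p\equiv 2\pmod 3$ divides $N$.

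For the normalizer I would use the degree-two quotient $X_{\mathrm{sp}}(N)\to X_{\mathrm{sp}}^+(N)$ by $\langle\omega\rangle$ (inducing $\langle\vec\omega\rangle$ on the product), as in Proposition \ref{prop:norm_sp_infty}. On the set of $s_3$-fixed cosets the involution $\omega=\begin{psmallmatrix}0&1\\1&0\end{psmallmatrix}$ conjugates $\begin{psmallmatrix}\zeta&0\\0&\zeta^2\end{psmallmatrix}$ to $\begin{psmallmatrix}\zeta^2&0\\0&\zeta\end{psmallmatrix}$, so it swaps the two diagonalizing cosets at each prime and fixes none; hence $\vec\omega$ acts freely on the full fixed-coset set. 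The cover is therefore unramified over the order-three elliptic points, each such point of $X_{\mathrm{sp}}^+(N)$ has exactly two preimages, and $\varepsilon_3^+=\varepsilon_3/2$ (trivially also when $\varepsilon_3=0$).

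The main obstacle I anticipate is pinning down the count of diagonalizing cosets as exactly two, that is, identifying the centralizer of a regular semisimple $s_3$ with $C_{\mathrm{sp}}$ and the transporter with a $C_{\mathrm{sp}}^+/C_{\mathrm{sp}}$-torsor, together with the genuinely different degenerate prime $p=3$, where $s_3$ has a repeated eigenvalue yet is not semisimple. A more computational alternative, matching the representatives of Lemma \ref{lem:Csp_cosets}, is to substitute $g=\alpha(u,v)$ and $g=\beta(u,v)$ into the condition $g^{-1}s_3g\in C_{\mathrm{sp}}(p^r)$ and count solutions $(u,v)$ directly; there the $\pm$ bookkeeping and the left-versus-right coset conventions are the parts most prone to error.
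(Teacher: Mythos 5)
Your proposal is correct, and while it shares the paper's outer skeleton --- the reduction via \cite[Prop 1.37]{shimura1971} to counting cosets that conjugate $\begin{psmallmatrix}0&-1\\1&-1\end{psmallmatrix}$ into $C_{\mathrm{sp}}$ or $C_{\mathrm{sp}}^+$, the CRT reduction to prime powers, and the degree-two unramified cover argument for $\varepsilon_3^+$ --- your central count proceeds by a genuinely different method. The paper substitutes the explicit representatives $\alpha(u,v)$ and $\beta(u,v)$ of Lemma \ref{lem:Csp_cosets} and solves the resulting congruences: equation \eqref{eq:nu3_1} yields two solutions exactly when $p \equiv 1 \pmod{3}$, and equation \eqref{eq:nu3_3} kills the $\beta$-cosets because $p$ is not invertible modulo $p^r$. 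You instead argue intrinsically: $\lambda^2+\lambda+1$ splits over $\Z/p^r\Z$ iff it splits over $\F_p$ (Hensel, for $p\neq 3$), and when it splits with distinct unit roots, $s_3$ is regular semisimple, so the set of $g$ with $gs_3g^{-1}$ diagonal is a single coset of $C_{\mathrm{sp}}^+(p^r)$, hence exactly two cosets of $C_{\mathrm{sp}}(p^r)$, which $\omega$ swaps --- giving simultaneously $\varepsilon_3(p^r)=2$ and the freeness of the $\omega$-action underlying $\varepsilon_3^+=\varepsilon_3/2$ (one should also note, as your trace observation does implicitly, that no coset can satisfy $gs_3g^{-1}\in \omega C_{\mathrm{sp}}$, since antidiagonal matrices have trace $0$ while $s_3$ has trace $-1$). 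Your route buys independence from the choice of coset representatives and, notably, a cleaner treatment of $p=3$: there $u^2-u+1\equiv 0 \pmod{3}$ actually has the solution $u\equiv 2$, so the paper's count at $p=3$, $r=1$ silently relies on the second congruence \eqref{eq:nu3_2} to eliminate it, whereas your non-semisimplicity argument (a diagonal conjugate of $s_3$ modulo $3$ would have to be the identity) disposes of it at once. What the paper's computation buys in exchange is uniformity: the same $\alpha,\beta$ substitution machinery runs unchanged for $\varepsilon_2$ and especially for $\varepsilon_2^+$ in Proposition \ref{prop:norm_sp_nu2}, where the extra antidiagonal conditions contribute nontrivially and a purely conjugacy-theoretic shortcut is no longer available.
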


\begin{proof}
    Let $\rho = e^{2\pi i/3}$ be a third root of unity. Note that $\SL_2(\Z)_\rho = \left\langle \begin{psmallmatrix}
        0 & -1 \\ 1 & -1
    \end{psmallmatrix}\right\rangle$. By \cite[Prop 1.37]{shimura1971}, the elliptic points of order 3 on $X_{\mathrm{sp}}(N)$ are the points in the inverse image $f^{-1}(\rho)$, where $f: \HH^*/\Gamma_{\mathrm{sp}}(N) \rightarrow \HH^*/\SL_2(\Z)$, with ramification index 1. By the same proposition, there exists $g \in \SL_2(\Z)/\Gamma_{\mathrm{sp}}(N)$ such that $\SL_2(\Z)_\rho \subseteq g^{-1}\Gamma_{\mathrm{sp}}(N) g$. Therefore, it is sufficient to find the number of coset representatives $\gamma $ such that $\gamma^{-1}\begin{psmallmatrix}
        0 & -1 \\ 1 & -1
    \end{psmallmatrix} \gamma \in C_{\mathrm{sp}}(N)$ and $C_{\mathrm{sp}}^+(N)$, respectively.

    We begin with prime power levels $N = p^r$. We have the following matrices:
    \begin{align}
        \alpha(u,v)^{-1}\begin{psmallmatrix}
        0 & -1 \\ 1 & -1
    \end{psmallmatrix}\alpha(u,v) &= \begin{pmatrix}
        -u^2v + uv - u - v & -u^2 + u - 1 \\ u^2v^2 -uv^2 + 2uv + v^2 -v + 1 & u^2 -uv + u + v -1
    \end{pmatrix}, \label{eq:alpha_conj}\\
    \beta(u,v)^{-1}\begin{psmallmatrix}
        0 & -1 \\ 1 & -1
    \end{psmallmatrix}\beta(u,v) &= (1 + puv)^{-1}\begin{pmatrix}
        u - pv - 1 & -p^2v^2 -pv - 1 \\ u^2 - u + 1 & -puv - u + pv
    \end{pmatrix}. \label{eq:beta_conj}
    \end{align}

    For Equation (\ref{eq:alpha_conj}) to be an element of $C_{\mathrm{sp}}(N)$, we require that the following conditions hold:
\begin{align}
    -u^2 +u -1 &\equiv 0 \pmod{N}, \label{eq:nu3_1} \\
    -v^2(-u^2 +u -1) + 2uv + -v + 1 &\equiv 0 \pmod{N}. \label{eq:nu3_2}
\end{align}

Equation (\ref{eq:nu3_1}) has two solutions if $p \equiv 1 \pmod{3}$ and $0$ otherwise. Equation (\ref{eq:nu3_2}) is determined by Equation (\ref{eq:nu3_1}). For Equation (\ref{eq:beta_conj}) to be an element of $C_{\mathrm{sp}}(N)$, we require that:
\begin{align}
    -p^2v^2 -pv - 1 &\equiv 0 \pmod{N}, \label{eq:nu3_3}\\
    u^2 - u + 1 &\equiv 0 \pmod{N}. \label{eq:nu3_4}
\end{align}
However, this does not yield any solution since $p$ is not invertible modulo $N$ in Equation (\ref{eq:nu3_3}).

Writing the level $N$ as a product of primes, we have two solutions for each prime $p$ dividing $N$ satisfying $p \equiv 
1 \pmod{3}$ and this gives $\varepsilon_3$. By the same argument in calculating $\varepsilon_\infty^+$, the morphism $X_{\mathrm{sp}}(N) \rightarrow X_{\mathrm{sp}}^+(N)$ is of degree $2$ and unramified over the elliptic points of order $3$ and therefore, we have $\varepsilon_3^+ = \varepsilon_3/2$.
\end{proof}

\begin{prop}\label{prop:norm_sp_nu2}
    Let $\varepsilon_2$ and $\varepsilon_2^+$ denote the number of elliptic points of order $2$ in $X_{\mathrm{sp}}(N)$ and $X_{\mathrm{sp}}^+(N)$, respectively. Then we have that 
    \begin{align*}
        \varepsilon_2 &= \begin{cases}
            0 & \text{if $2\mid N$, } \\
            \prod\limits_{p\mid N} \left( 1 + \left(\frac{-1}{p}\right) \right) & \text{otherwise.}
        \end{cases}\\
    \varepsilon_2^+ &= \frac{\varepsilon_2}{2}
        + \left( \frac{N}{2}\cdot \prod_{\substack{p\mid N, \\ p \equiv 1 \bmod 4}}\left( 1 - \frac{1}{p}\right) \cdot \prod_{\substack{p\mid N, \\ p \equiv 3 \bmod 4}}\left(1 + \frac{1}{p}\right)\right),
    \end{align*}
\end{prop}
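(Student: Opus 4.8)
The plan is to run the strategy of Proposition \ref{prop:norm_sp_nu3} essentially verbatim, replacing the order-$3$ generator by the generator $S := \begin{psmallmatrix}0&-1\\1&0\end{psmallmatrix}$ of $\SL_2(\Z)_i$. As there, I reduce to a prime power $N=p^r$, conjugate $S$ by each coset representative $\alpha(u,v),\beta(u,v)$ of Lemma \ref{lem:Csp_cosets}, and read off when the result lands in the split Cartan. A direct computation (the analogue of \eqref{eq:alpha_conj}--\eqref{eq:beta_conj}) shows the off-diagonal entries of $\alpha(u,v)^{-1}S\alpha(u,v)$ are $-(1+u^2)$ and $v^2+(1+uv)^2$; requiring both to vanish forces $u^2\equiv-1\pmod{p^r}$ and then a unique $v\equiv-(2u)^{-1}$. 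For $\beta(u,v)$ the relevant off-diagonal entry is $-p^2v^2-1$, which is a unit and hence never zero mod $p^r$, so $\beta$ contributes nothing (the same obstruction as in the order-$3$ case). Thus for odd $p$ the number of diagonal solutions equals the number of square roots of $-1$, namely $1+\left(\tfrac{-1}{p}\right)$; multiplicativity via the Chinese Remainder Theorem gives the stated $\varepsilon_2$, and $2\mid N$ is handled directly as in Proposition \ref{prop:norm_sp_infty} to give $\varepsilon_2=0$.

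For $\varepsilon_2^+$ I would deliberately avoid the ``unramified degree $2$'' shortcut used for $\varepsilon_3^+$, because it fails here. The clean device is the element count
\[
\varepsilon_2^+=\frac{1}{|C_{\mathrm{sp}}^+(N)|}\,\#\{\gamma\in\GL_2(\Z/N\Z):\gamma^{-1}S\gamma\in C_{\mathrm{sp}}^+(N)\},
\]
together with the analogous identity for $\varepsilon_2$ using $C_{\mathrm{sp}}(N)$. Splitting $C_{\mathrm{sp}}^+(N)=C_{\mathrm{sp}}(N)\sqcup\omega\,C_{\mathrm{sp}}(N)$ and using $|C_{\mathrm{sp}}^+(N)|=2|C_{\mathrm{sp}}(N)|$ yields
\[
\varepsilon_2^+=\tfrac12\varepsilon_2+\tfrac12\,b,\qquad b:=\#\{\gamma\,C_{\mathrm{sp}}(N):\gamma^{-1}S\gamma\ \text{is anti-diagonal}\}.
\]
The new phenomenon, and the reason order $2$ differs from order $3$, is that an anti-diagonal involution squares to $-I$ and so represents an order-$2$ element of $\PSL_2$, whereas no anti-diagonal matrix can have order $3$ (its square is scalar). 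Hence $S$ genuinely conjugates into the non-Cartan part $\omega\,C_{\mathrm{sp}}(N)$ and produces the extra term, while the order-$3$ generator never does.

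It remains to compute $b$. Vanishing of the diagonal entries of $\alpha(u,v)^{-1}S\alpha(u,v)$ is the single condition $u+v(1+u^2)\equiv0\pmod{p^r}$: for each $u$ with $1+u^2$ a unit this determines $v$ uniquely, while if $1+u^2$ is a non-unit (i.e.\ $u^2\equiv-1\bmod p$) there is no solution, giving $p^{r-1}\!\left(p-1-\left(\tfrac{-1}{p}\right)\right)$ solutions; vanishing of the diagonal entries of $\beta(u,v)^{-1}S\beta(u,v)$ is $u\equiv pv\pmod{p^r}$, contributing a further $p^{r-1}$. Hence $b_p=p^{r-1}\!\left(p-\left(\tfrac{-1}{p}\right)\right)$. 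Since the $+$-normalizer uses the single simultaneous involution $\vec\omega=(\omega,\dots,\omega)$, the coset $\omega\,C_{\mathrm{sp}}(N)$ corresponds under the Chinese Remainder Theorem to matrices that are anti-diagonal at every prime at once, so $b=\prod_{p\mid N}b_p$; substituting $p-\left(\tfrac{-1}{p}\right)=p-1$ for $p\equiv1\bmod4$ and $p+1$ for $p\equiv3\bmod4$ then shows $\tfrac12 b$ equals the claimed product.

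The main obstacle is exactly the passage recorded in the second paragraph. The covering $X_{\mathrm{sp}}(N)\to X_{\mathrm{sp}}^+(N)$ is ramified over some of the new order-$2$ points — the $\vec\omega$-fixed anti-diagonal cosets — so a naive ``divide by $2$'' of an orbit count is incorrect; one must argue through the group-element count above, which automatically produces the correct average $\tfrac12 b$ rather than an orbit count sensitive to those fixed points. I expect the only other point needing its own direct check to be the prime $p=2$, where $2$ is not invertible and the representatives of Lemma \ref{lem:Csp_cosets} behave differently; there one verifies separately that $b_2=2^{v_2(N)}$, consistent with the factor $N/2$ (rather than $(N/2)(1\pm\tfrac12)$) appearing in the stated formula.
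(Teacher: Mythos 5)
Your proposal is correct and follows the paper's argument essentially verbatim: the same coset representatives from Lemma \ref{lem:Csp_cosets}, the same conjugation congruences (the analogues of \eqref{eq:nu2_1}--\eqref{eq:nu2p_2}) with identical per-prime solution counts ($p^r-2p^{r-1}$, $p^r$, or $2^{r-1}$ from the $\alpha$'s and $p^{r-1}$ from the $\beta$'s), and the same CRT assembly via the simultaneous involution $\vec{\omega}$. The only difference is cosmetic bookkeeping at the end: you pass from $C_{\mathrm{sp}}(N)$-coset counts to $C_{\mathrm{sp}}^+(N)$-coset counts via the averaging identity $\varepsilon_2^+=(\varepsilon_2+b)/2$, while the paper uses the explicit pairing of Lemma \ref{lem:Csp_cosets}; these are equivalent because the map $\GL_2(\Z/N\Z)/C_{\mathrm{sp}}(N)\to \GL_2(\Z/N\Z)/C_{\mathrm{sp}}^+(N)$ is exactly $2$-to-$1$ (as $\omega\notin C_{\mathrm{sp}}(N)$, no $C_{\mathrm{sp}}$-coset is fixed), so the ramification subtlety you flag lives on the curves rather than on the coset spaces and does not affect the paper's pairing argument either.
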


\begin{proof}
    Note that $\SL_2(\Z)_i = \left\langle \begin{psmallmatrix}
        0 & -1 \\ 1 & 0
    \end{psmallmatrix}\right\rangle$. By \cite[Prop 1.37]{shimura1971}, the elliptic points of order 2 on $X_{\mathrm{sp}}(N)$ are the points in the inverse image $f^{-1}(i)$, where $f: \HH^*/\Gamma_{\mathrm{sp}}(N) \rightarrow \HH^*/\SL_2(\Z)$, with ramification index 1. By the same proposition, there exists $g \in \SL_2(\Z)/\Gamma_{\mathrm{sp}}(N)$ such that $\SL_2(\Z)_i \subseteq g^{-1}\Gamma_{\mathrm{sp}}(N) g$. Therefore, it is sufficient to find the number of coset representatives $\gamma $ such that $\gamma^{-1}\begin{psmallmatrix}
        0 & -1 \\ 1 & 0
    \end{psmallmatrix} \gamma \in C_{\mathrm{sp}}(N)$ and $C_{\mathrm{sp}}^+(N)$, respectively.

    We begin with prime power levels $N = p^r$. We have the following matrices:
    \begin{align}
        \alpha(u,v)^{-1}\begin{psmallmatrix}
        0 & -1 \\ 1 & 0
    \end{psmallmatrix}\alpha(u,v) &= \begin{pmatrix}
    -u^2v -u -v & -u^2 - 1 \\
    u^2v^2 + 2uv + v^2 + 1 & u^2v + u + v
    \end{pmatrix}, \label{eq:alpha_conj2}\\
    \beta(u,v)^{-1}\begin{psmallmatrix}
        0 & -1 \\ 1 & 0
    \end{psmallmatrix}\beta(u,v) &= (1 + puv)^{-1}\begin{pmatrix}
    u-pv & -p^2v^2 -1 \\ u^2 + 1& pv-u
    \end{pmatrix}. \label{eq:beta_conj2}
    \end{align}

For Equation (\ref{eq:alpha_conj2}) to be an element of $C_{\mathrm{sp}}(N)$, we require that the following conditions hold:
\begin{align}
    -u^2 - 1 &\equiv 0 \pmod{N}, \label{eq:nu2_1} \\
    -v^2 (-u^2 -1) + 2uv +  1 &\equiv 0 \pmod{N}. \label{eq:nu2_2}
\end{align}

Equation (\ref{eq:nu2_1}) has two solutions when $p \equiv 1 \pmod{4}$ and $0$ otherwise. Equation(\ref{eq:nu2_2}) is determined by Equation (\ref{eq:nu2_1}). For Equation (\ref{eq:beta_conj2}) to be an element of $C_{\mathrm{sp}}(N)$, we require that:
\begin{align}
    -p^2v^2 -1 \equiv 0 \pmod{N}, \label{eq:nu2_3} \\
    u^2 + 1 \equiv 0 \pmod{N}. \label{eq:nu2_4}
\end{align}
However, this does not yield any solution since $p$ is not invertible modulo $N$ in Equation (\ref{eq:nu2_3}). Writing the composite level $N$ as a product of prime powers, we obtain $\varepsilon_2$.

For $\varepsilon_2^+$, we use a different argument, starting with the prime power level $N = p^r$. For Equations (\ref{eq:alpha_conj2}, \ref{eq:beta_conj2}) to be elements of $C_{\mathrm{sp}}^+(N)$, in addition to Equations (\ref{eq:nu2_1}, \ref{eq:nu2_2}, \ref{eq:nu2_3}, \ref{eq:nu2_4}), we also require that:
\begin{align}
    u^2v + u + v \equiv 0 \pmod{N}, \label{eq:nu2p_1} \\ 
    u-pv \equiv 0 \pmod{N}. \label{eq:nu2p_2}
\end{align}

In Equation (\ref{eq:nu2p_1}), note that $v \equiv -u(u^2+1)^{-1} \pmod{N}$. We observe that $u^2 + 1$ is not invertible when $p \equiv 1 \pmod{4}$ and is always invertible when $p \equiv 3 \pmod{4}$. It follows that there are $p^r$ solutions when $p \equiv 3 \pmod{4}$ and $p^r - 2p^{r-1}$ solutions when $p \equiv 1 \pmod{4}$. When $p=2$, $u^2+1$ is not invertible modulo $N$ half of the time, so there are $2^r/2 = 2^{r-1}$ solutions. In Equation (\ref{eq:nu2p_2}), there are $p^{r-1}$ solutions, with no restrictions on $p$.

The identification from Lemma (\ref{lem:Csp_cosets}) identifies the two solutions from Equations (\ref{eq:nu2_1}, \ref{eq:nu2_2}), and each solution of Equations (\ref{eq:nu2p_1}, \ref{eq:nu2p_2}) is paired via
\[
\alpha(u,-u(u^2 + 1)^{-1}) \sim \begin{cases}
    \alpha(-u^{-1},u(u^2 + 1)^{-1}) & \text{if $u$ is invertible,} \\
    \beta(u, u/p) & \text{if $u$ is not invertible.}
\end{cases}
\]
This recovers the formula in \cite{DLM2022} for prime power levels $N = p^r$:
\[
\varepsilon_2^+ = \begin{cases}
    2^{r-1} & \text{ if $p = 2$,} \\
    1 + (p^r - p^{r-1})/2 & \text{ if $p \equiv 1 \pmod{4}$,} \\
    (p^r + p^{r-1})/2 & \text{ if $p \equiv 3 \pmod{4}$.}
\end{cases}
\]

    In the discussion after Lemma \ref{lem:Csp_cosets}, the involution $\omega$ acts on both $\GL_2(\Z/N\Z)/C_{\mathrm{sp}}(N)$ and
 $\prod_{p} \GL_2(\Z/p_i^{e_i}\Z)/C_{\mathrm{sp}}(p_i^{e_i})$, thus identifying pairs of solutions in $\prod_{p} \GL_2(\Z/p_i^{e_i}\Z)/C_{\mathrm{sp}}(p_i^{e_i})$. Dividing the above analysis into two parts, one coming from $C_{\mathrm{sp}}(N)$ and the other from $C_{\mathrm{sp}}^+(N)$, we obtain the formula for $\varepsilon_2^+$.

\end{proof}

Using the multiplicative relations from \cite{DLM2022}, one can produce formulas for $\varepsilon_2^*$, $\varepsilon_3^*$, and $\varepsilon_\infty^*$ of $X_{\mathrm{sp}}^*(N)$.

\begin{corollary}\label{cor:invariants_sp_star}
    Let $\varepsilon_2^*$, $\varepsilon_3^*$, $\varepsilon_\infty^*$ denote the number of elliptic points of orders $2$, $3$, and cusps in $X_{\mathrm{sp}}^*(N)$, respectively. Then we have that
    \begin{align*}
        \varepsilon_\infty^* &= \begin{cases}
        \frac{N}{3\cdot 2^{\omega(N)-2}}\prod\limits_{p\mid N}(1 + 1/p) &\text{if } 2||N, \\
        \frac{N}{2^{\omega(N)}}\prod\limits_{p\mid N}(1 + 1/p)&\text{otherwise,}
    \end{cases} \\
        \varepsilon_3^* &= \begin{cases}
        1 &  p \equiv 1\pmod{3} \text{ for all } p \mid N,   \\
        0 &\text{otherwise,}
    \end{cases}  \\
        \varepsilon_2^* &= 2^{\nu_2(N)-1} \cdot \prod_{\substack{p\mid N, \\ p \equiv 1 \bmod 4}} \left( 1 + \frac{p^{\nu_p(N)-1}(p-1)}{2}\right) \cdot \prod_{\substack{p\mid N, \\ p \equiv 3 \bmod 4}} \frac{p^{\nu_p(N)-1}(p+1)}{2}.
    \end{align*}
\end{corollary}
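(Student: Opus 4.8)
The plan is to exploit the product decomposition $C_{\mathrm{sp}}^*(N) = \prod_{p\mid N} C_{\mathrm{sp}}^+(p^{e_p})$ coming from the Chinese Remainder Theorem, as in the discussion following Lemma \ref{lem:Csp_cosets}, but now adjoining one involution \emph{per prime} rather than the single global $\omega$. Because the $*$-normalizer is literally a direct product across the prime factors, the coset space $\Gamma_{\mathrm{sp}}^*(N)\backslash\SL_2(\Z/N\Z)$ factors as a product of the local coset spaces, and each of the three test matrices $\begin{psmallmatrix}0&-1\\1&0\end{psmallmatrix}$, $\begin{psmallmatrix}0&-1\\1&-1\end{psmallmatrix}$, $\begin{psmallmatrix}1&1\\0&1\end{psmallmatrix}$ acts diagonally. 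This is exactly the setting in which the multiplicative relations of \cite[Eq.~3.14]{DLM2022} apply, reducing each invariant $\varepsilon_\bullet^*(N)$ to a product of the prime-power invariants $\varepsilon_\bullet^+(p^{e_p})$ already obtained in Propositions \ref{prop:norm_sp_infty}, \ref{prop:norm_sp_nu3}, and \ref{prop:norm_sp_nu2}.

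For the elliptic point counts the reduction is clean. A coset in the product is fixed by the diagonal elliptic element precisely when each of its local components is fixed locally, so $\varepsilon_2^*(N) = \prod_{p\mid N}\varepsilon_2^+(p^{e_p})$ and $\varepsilon_3^*(N) = \prod_{p\mid N}\varepsilon_3^+(p^{e_p})$, with no exceptional primes. Substituting the prime-power values $\varepsilon_3^+(p^r)$ from Proposition \ref{prop:norm_sp_nu3} (which is $1$ when $p\equiv 1\bmod 3$ and $0$ otherwise, including $p=2,3$) immediately gives the stated formula for $\varepsilon_3^*$. Likewise, substituting $\varepsilon_2^+(2^r)=2^{r-1}$, $\varepsilon_2^+(p^r)=1+p^{r-1}(p-1)/2$ for $p\equiv 1\bmod 4$, and $\varepsilon_2^+(p^r)=p^{r-1}(p+1)/2$ for $p\equiv 3\bmod 4$ from Proposition \ref{prop:norm_sp_nu2}, and writing $r=\nu_p(N)$, yields the displayed product for $\varepsilon_2^*$.

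The number of cusps requires slightly more care, since the orbit count of a product under a cyclic group is generally not the product of the local orbit counts. The saving point is that the cusps of $X_{\mathrm{sp}}^+(p^{e_p})$ have widths dividing $p^{e_p}$, so cusp widths at distinct primes are coprime; consequently the length of the $T$-orbit of a tuple is the (trivial) least common multiple of its local widths, and the orbit count genuinely multiplies, giving $\varepsilon_\infty^*(N)=\prod_{p\mid N}\varepsilon_\infty^+(p^{e_p})$. For $p^{e_p}$ with $(p,e_p)\neq(2,1)$ the degree-$2$ map is unramified over the cusps (Proposition \ref{prop:norm_sp_infty}), so $\varepsilon_\infty^+(p^{e_p}) = \tfrac12 p^{e_p}(1+1/p)$, and taking the product over all $p\mid N$ recovers $\tfrac{N}{2^{\omega(N)}}\prod_{p\mid N}(1+1/p)$, the generic case.

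The single exception, and the only real obstacle, is the prime power $2^1$: there the formula $\tfrac12\cdot 2\cdot(1+1/2)=3/2$ is not even an integer, reflecting that the degree-$2$ cover $X_{\mathrm{sp}}(2)\to X_{\mathrm{sp}}^+(2)$ ramifies over a cusp, so the correct local value is $\varepsilon_\infty^+(2)=2$ as recorded at the end of Proposition \ref{prop:norm_sp_infty}. Replacing the factor $3/2$ by $2$ exactly when $2\|N$ multiplies the generic answer by $4/3$, which rewrites as $\tfrac{N}{3\cdot 2^{\omega(N)-2}}\prod_{p\mid N}(1+1/p)$ and produces the first case of the formula. The remaining work is routine algebraic simplification of these products, together with checking the genuinely small levels ($N\le 2$) directly, where the coarse-moduli and representability caveats of Section \ref{sec:background} make the orbit computation degenerate.
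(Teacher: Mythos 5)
Your proposal is correct and follows essentially the same route as the paper, which obtains the corollary by combining the prime-power values from Propositions \ref{prop:norm_sp_infty}, \ref{prop:norm_sp_nu3}, and \ref{prop:norm_sp_nu2} with the multiplicative relations cited from \cite{DLM2022}; your only addition is to justify that multiplicativity directly (the CRT product decomposition $C_{\mathrm{sp}}^*(N)=\prod_{p\mid N}C_{\mathrm{sp}}^+(p^{\nu_p(N)})$, fixed-coset counting for $\varepsilon_2^*,\varepsilon_3^*$, the coprime cusp-width argument for $\varepsilon_\infty^*$, and the $\varepsilon_\infty^+(2)=2$ correction when $2\| N$), details the paper delegates to the citation. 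One reading note: in the displayed formula for $\varepsilon_2^*$ the factor $2^{\nu_2(N)-1}$ must be understood as present only when $2\mid N$ (an empty factor for odd $N$), which is exactly what your product $\prod_{p\mid N}\varepsilon_2^+(p^{\nu_p(N)})$ produces.
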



\section{Non-split Cartan and normalizer of the non-split Cartan}\label{sec:nonsplit_cartan}

In this section, we further describe the modular curves $X_{\mathrm{ns}}(N)$, $X_{\mathrm{ns}}^+(N)$, and $X_{\mathrm{ns}}^*(N)$ from Section \ref{rem:Cartan_subgps} and Remark \ref{rem:plus_star}. We derive the invariants of their genus formulas for composite level $N$, using the methods outlined in \cite{BARAN20102753}.



The non-split Cartan subgroup can also be defined explicitly. Let $R = \Z[\alpha]$ be a quadratic order such that $\alpha$ satisfies an irreducible monic polynomial $X^2 - uX + v \in \Z[X]$. Suppose that the discriminant of $R$ is coprime to $N$ and that every prime dividing $N$ is inert in $R$. Then $A = R/NR$ is free $\Z/N\Z$-module of rank $2$. For $x \in A$, the multiplication-by-$x$ map induces a map $A^\times \hookrightarrow \GL_2(\Z/N\Z)$ and the image is a non-split Cartan subgroup, denoted $C_{\mathrm{ns}}(N)$. 

For the normalizer of $C_{\mathrm{ns}}(N)$ in $\GL_2(\Z/N\Z)$, there is a ring automorphism $S_N$ of order $2$. Viewing $C_{\mathrm{ns}}(N)$ as $(\Z/N\Z)[\alpha]^\times$, since $\alpha$ is a root of $X^2 - uX + v$, $S_N(\alpha)$ also satisfies this polynomial and is therefore equal to $u-\alpha$. This gives an action of $S_N$ on $(\Z/N\Z)[\alpha]$ in the expected manner
\begin{align*}
1 &\mapsto 1 \pmod{N}, \\
\alpha &\mapsto u-\alpha \pmod{N}.
\end{align*}

The normalizer of the non-split Cartan subgroup, denoted $C_{\mathrm{ns}}^+(N)$, is defined as the group generated by the non-split Cartan subgroup and this involution $\langle C_{\mathrm{ns}}(N), S_N \rangle$.

We denote the modular curves associated to $C_{\mathrm{ns}}(N)$ and $C_{\mathrm{ns}}^+(N)$ by $X_{\mathrm{ns}}(N)$ and $X_{\mathrm{ns}}^+(N)$, respectively. There are natural finite morphisms $\phi_1$ and $\phi_2$ such that 
\[
X_{\mathrm{ns}}(N) \xrightarrow[]{\phi_1} X_{\mathrm{ns}}^+(N) \xrightarrow[]{\phi_2} X(1),
\]
where $\operatorname{deg}(\phi_1) = 2$ and $\operatorname{deg}(\phi_2) = N\cdot \varphi(N)/2$. Note that $\operatorname{deg}(\phi_2) = [\GL_2(\Z/N\Z): C_{\mathrm{ns}}^+(N)]$.

Let $C_{\mathrm{ns}} '(N) := C_{\mathrm{ns}}(N) \cap \SL_2(\Z/N\Z)$ and let $C_{\mathrm{ns}}^+{} '(N) := C_{\mathrm{ns}}^+ (N) \cap \SL_2(\Z/N\Z)$, where $C_{\mathrm{ns}} '(N)$ and $C_{\mathrm{ns}}^+{} '(N)$ can be identified with the following groups:
\begin{align*}
    C_{\mathrm{ns}}'(N) &= \{ M_y: y \in (\Z/N\Z)[\alpha]^\times, N(\alpha) = 1 \}, \\
    C_{\mathrm{ns}}^+ {} '(N) &= \{ M_y: y \in (\Z/N\Z)[\alpha]^\times, N(\alpha) = 1 \} \cup \{ M_y \circ S_N: y \in (\Z/N\Z)[\alpha]^\times, N(\alpha) = -1 \}, 
\end{align*}
where $M_y$ is the multiplication-by-$y$ map on $(\Z/N\Z)[\alpha]$, $N(x) = x\bar{x}$ is the norm map, and $\overline{a + b\alpha} = a + b(u - \alpha)$. For every element $a \in (\Z/N\Z)^\times$ or $a \in (\Z/N\Z)^\times /\pm 1$, choose $y_a \in (\Z/N\Z)[\alpha]^\times$ such that $N(y_a) = a$. We define the sets $\mathcal{Y}(N)$ and $\mathcal{Y}_{\pm}(N)$ as follows:
\begin{align*}
    \mathcal{Y}(N) &= \{ y_a \in (\Z/N\Z)[\alpha]^\times : N(y_a) = a, a \in (\Z/N\Z)^\times \}, \\
    \mathcal{Y}_{\pm} (N) &=\{ y_a \in (\Z/N\Z)[\alpha]^\times : N(y_a) = a, a \in (\Z/N\Z)^\times/\pm 1 \}.
\end{align*}

The following proposition provides a matrix representation for the cosets of $\Gamma_{\mathrm{ns}}$ and $\Gamma_{\mathrm{ns}}^+$ in $\SL_2(\Z)$.

\begin{prop}[Proposition 6.2 and 6.3, \cite{BARAN20102753}]\label{prop:matrix_rep}
    The coset representatives of $C_{\mathrm{ns}}'(N)$ $($resp. ${C_{\mathrm{ns}}^+} '(N))$ in $SL_2(\Z/N\Z)$ can be represented as linear maps that transform the basis $\{ 1, \alpha \}$ as 
\begin{align*}
    1 & \mapsto y^{-1}, \\
    \alpha & \mapsto \bar{y}(\alpha + x),
\end{align*}
where $x \in \Z/N\Z$ and $y \in \mathcal{Y}(N)$ $($resp. $y \in \mathcal{Y}_{\pm}(N))$.
\end{prop}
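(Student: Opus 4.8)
The plan is to identify $\SL_2(\Z/N\Z)$ with the determinant-one automorphisms of the free rank-two module $A = (\Z/N\Z)[\alpha]$ and to read off the asserted normal form from the alternating form attached to the basis $\{1,\alpha\}$. Writing $p = p_0 + p_1\alpha$, set $\langle p, q\rangle := p_0 q_1 - p_1 q_0$. Two identities drive the argument: $\det g = \langle g(1), g(\alpha)\rangle$ for every $\Z/N\Z$-linear $g$ (in particular $\langle 1,\alpha\rangle = 1$), and $\langle M_s p, M_s q\rangle = \Norm(s)\,\langle p, q\rangle$, since multiplication by $s$ has determinant $\Norm(s)$ and any alternating form transforms by the determinant of an endomorphism. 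Because $\det g = 1$ is a unit, reducing modulo the maximal ideal above any prime $p \mid N$ (where the residue ring is $\F_{p^2}$) forces $g(1)$ to be a unit of $A$: otherwise $\langle g(1), g(\alpha)\rangle$ would vanish mod $p$.

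The first step is a normal form lemma. Given $g \in \SL_2(\Z/N\Z)$, put $y := g(1)^{-1} \in A^\times$, so $g(1) = y^{-1}$, and write $g(\alpha) = \bar y\, w$ with $w := \bar y^{-1} g(\alpha)$. Using $\bar y = \Norm(y)\,y^{-1}$ and the two scaling identities, the condition $1 = \det g = \langle y^{-1}, \bar y w\rangle$ collapses to $\langle 1, w\rangle = 1$, i.e.\ the $\alpha$-coefficient of $w$ equals $1$; hence $w = \alpha + x$ for a unique $x \in \Z/N\Z$. This shows $g = L_{x,y}$, the map $1 \mapsto y^{-1}$, $\alpha \mapsto \bar y(\alpha + x)$, and conversely each such $L_{x,y}$ has determinant one; so $(x,y) \mapsto L_{x,y}$ is a bijection from $(\Z/N\Z) \times A^\times$ onto $\SL_2(\Z/N\Z)$.

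Descending to cosets handles the first assertion. A direct computation gives $M_z L_{x,y} = L_{x,\, z^{-1}y}$ whenever $\Norm(z) = 1$, so the orbit of $(x,y)$ under left multiplication by $C_{\mathrm{ns}}'(N) = \{M_z : \Norm(z) = 1\}$ is $\{x\} \times \{\text{units of norm } \Norm(y)\}$. Hence the cosets in $C_{\mathrm{ns}}'(N)\backslash\SL_2(\Z/N\Z)$ are indexed bijectively by $(x, \Norm(y)) \in (\Z/N\Z) \times (\Z/N\Z)^\times$, and choosing one element $y$ of each norm — that is, $y \in \mathcal{Y}(N)$ — selects exactly one representative $L_{x,y}$ per coset.

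For the normalizer, fix $s_0 \in A^\times$ with $\Norm(s_0) = -1$ and write $C_{\mathrm{ns}}^+{}'(N) = C_{\mathrm{ns}}'(N) \sqcup C_{\mathrm{ns}}'(N)\,(M_{s_0} S_N)$; its cosets are the $C_{\mathrm{ns}}'(N)$-cosets fused in pairs by the involution induced by $M_{s_0}S_N$. Computing $M_{s_0}S_N\,L_{x,y}$ in normal form, via $S_N(p) = \bar p$ and $\bar\alpha = u - \alpha$, yields $L_{x',y'}$ with $\Norm(y') = -\Norm(y)$ and $x' = -u - x$; on the index set this is $(x,a) \mapsto (-u - x, -a)$, which visibly squares to the identity. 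The main obstacle is the bookkeeping that results because the two fused cosets carry norms $a$ and $-a$, hence lie in the \emph{same} class of $(\Z/N\Z)^\times/\{\pm 1\}$. For $N > 2$ a unit $a$ satisfies $a \ne -a$, so this involution is fixed-point-free and each $C_{\mathrm{ns}}^+{}'(N)$-coset contains exactly one $C_{\mathrm{ns}}'(N)$-coset whose norm equals the chosen value; letting $x$ range over $\Z/N\Z$ and $y$ over $\mathcal{Y}_\pm(N)$ (one element per $\pm$-class of norms) then meets each fused coset exactly once, giving the second assertion. The finitely many small levels $N \le 2$, where $a = -a$ can occur, are checked directly.
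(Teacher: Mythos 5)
Your argument is correct, but it takes a genuinely different route from the paper's. The paper proves the proposition by counting: it records that $[\SL_2(\Z/N\Z):C_{\mathrm{ns}}'(N)]=[\GL_2(\Z/N\Z):C_{\mathrm{ns}}(N)]=N\varphi(N)$ (resp.\ $N\varphi(N)/2$ for the normalizer), observes that the number of proposed maps $L_{x,y}$ is the same, and then checks pairwise distinctness of cosets by taking norms -- if two such maps lie in a common coset then $\Norm(y)=\Norm(y')$, which forces $y=y'$ since both lie in $\mathcal{Y}(N)$ (resp.\ $\mathcal{Y}_{\pm}(N)$), and hence $x=x'$. You instead prove the stronger normal-form statement that $(x,y)\mapsto L_{x,y}$ is a bijection from $(\Z/N\Z)\times A^\times$ onto $\SL_2(\Z/N\Z)$, using the alternating form attached to $\{1,\alpha\}$ together with $\det g=\langle g(1),g(\alpha)\rangle$ and $\det M_s=\Norm(s)$, and only then descend to cosets via $M_zL_{x,y}=L_{x,\,z^{-1}y}$ and, for the normalizer, the explicitly computed fusing involution $(x,a)\mapsto(-u-x,-a)$. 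Your route buys two things the paper leaves implicit (or defers to Baran): that each $L_{x,y}$ really lies in $\SL_2(\Z/N\Z)$, and that \emph{every} coset contains a map of this shape -- the paper gets surjectivity only from the cardinality count, whereas you get it constructively, with no index computation needed; the explicit involution on the index set is also reusable data. One simplification is available at the end: fixed-point-freeness of the fusing involution is automatic, since $C_{\mathrm{ns}}'(N)g=C_{\mathrm{ns}}'(N)M_{s_0}S_Ng$ would force $M_{s_0}S_N\in C_{\mathrm{ns}}'(N)$, i.e.\ $S_N=M_z$ for some $z$, which fails because $\alpha=\bar\alpha=u-\alpha$ is impossible in the basis $\{1,\alpha\}$ (for $N=2$ this uses that $u$ is odd, as the discriminant is prime to $N$); so your separate treatment of $N\le 2$, where $a=-a$ can occur in $(\Z/N\Z)^\times$, is harmless but unnecessary.
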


\begin{proof}
    We have the following equality
    \[
    [\SL_2(\Z) : \Gamma_{\mathrm{ns}}(N)] = [\SL_2(\Z/N\Z) : C_{\mathrm{ns}}'(N)] = [\GL_2(\Z/NZ) : C_{\mathrm{ns}}(N)]
    \]
    and a similar statement holds for $C_{\mathrm{ns}}^+(N)$. The number of cosets of $C_{\mathrm{ns}}(N)$ and $C_{\mathrm{ns}}^+(N)$ in $\GL_2(\Z/N\Z)$ are $N \varphi(N)$ and $N \varphi(N)/2$, respectively, and $ |\mathcal{Y}(N)| = \varphi(N)$ and $|\mathcal{Y}_\pm (N) |= \varphi(N)/2$. It remains to show that, under the identification above, the linear maps represent different cosets.
    
    Suppose we have two elements in the same coset of $\SL_2(\Z/N\Z)/C_{\mathrm{ns}}(N)$:
\begin{align*}
    1 &\mapsto y^{-1},  &1 &\mapsto y' {} ^{-1},\\
    \alpha &\mapsto \bar{y}(\alpha + x), &\alpha &\mapsto \overline{y'}(\alpha + x').
\end{align*}
Then we have $N(y) = N(y')$ and since $y,y' \in \mathcal{Y}(N)$, it follows that $y = y'$, which implies $x = x'$.

In the case of $C_{\mathrm{ns}}^+(N)$, observe that the linear maps that transform the basis $\{ 1,\alpha \}$ have two possibilities:
\begin{align*}
    1 &\mapsto y &1 &\mapsto \bar{y}\\
    \alpha &\mapsto y\alpha &\alpha &\mapsto \bar{y}\bar{ \alpha}.
\end{align*}
where $y \in (\Z/N\Z)[\alpha]^\times$ with $N(y) = 1$ and $N(y) = -1$, respectively. Suppose we have two elements in the same coset of $\SL_2(\Z/NZ)/C_{\mathrm{ns}}^+(N)$:
\begin{align*}
    1 &\mapsto y^{-1},  &1 &\mapsto y' {} ^{-1},\\
    \alpha &\mapsto \bar{y}(\alpha + x), &\alpha &\mapsto \overline{y'}(\alpha + x').
\end{align*}
Then the same argument shows that we have $N(y) = N(y')$ and since $y,y' \in \mathcal{Y}_\pm (N)$, it follows that $y = y'$, which implies $x = x'$.
\end{proof}

With this setup, we can begin calculating the quantities $\varepsilon_2, \varepsilon_3$, and $\varepsilon_\infty$ in the genus formula.

\begin{prop}\label{prop:norm_ns_infty}
    Let $\varepsilon_\infty$ and $\varepsilon_\infty^+$ denote the number of cusps in $X_{\mathrm{ns}}(N)$ and $X_{\mathrm{ns}}^+(N)$, respectively. Then we have that
    \begin{align*}
        \varepsilon_\infty &= \varphi(N), \\
        \varepsilon_\infty^+ &= \begin{cases}
            1 & \text{if } N = 2, \\
            \varphi(N)/2 & \text{otherwise.}
        \end{cases}
    \end{align*}
\end{prop}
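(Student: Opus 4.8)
The plan is to count cusps as the orbits of the right multiplication action of $T := \begin{psmallmatrix}1&1\\0&1\end{psmallmatrix}$ on the coset space $\Gamma_{\mathrm{ns}}(N)\backslash\SL_2(\Z/N\Z) = C_{\mathrm{ns}}'(N)\backslash \SL_2(\Z/N\Z)$, using the explicit representatives from Proposition \ref{prop:matrix_rep}. Write $g_{x,y}$ for the representative sending $1\mapsto y^{-1}$ and $\alpha \mapsto \bar{y}(\alpha + x)$, with $x\in\Z/N\Z$ and $y\in\mathcal{Y}(N)$; these account for all $N\varphi(N)$ cosets. The first step is to compute how $T$ permutes the pairs $(x,y)$, and the second is to read off the orbit count.

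For the $T$-action, I would view $g_{x,y}$ as a linear map on $A=(\Z/N\Z)[\alpha]$ and compute the composite $g_{x,y}\circ T$, where $T$ acts on the basis by $1\mapsto 1$, $\alpha\mapsto 1+\alpha$. A direct calculation gives $(g_{x,y}\circ T)(1)=y^{-1}$ and $(g_{x,y}\circ T)(\alpha)=y^{-1}+\bar{y}(\alpha+x)$. Using $y^{-1}=N(y)^{-1}\bar{y}$, one rewrites this in the standard form of Proposition \ref{prop:matrix_rep} with the \emph{same} $y$ and with $x$ replaced by $x+N(y)^{-1}$; in particular no nontrivial element of $C_{\mathrm{ns}}'(N)$ is needed. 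Hence the action is $(x,y)\mapsto(x+N(y)^{-1},y)$. Since $N(y)^{-1}$ is a unit in $\Z/N\Z$, for each fixed $y$ the orbit of $x$ sweeps out all of $\Z/N\Z$, so each $y\in\mathcal{Y}(N)$ contributes exactly one cusp of width $N$. Therefore $\varepsilon_\infty = |\mathcal{Y}(N)| = \varphi(N)$.

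For the normalizer $X_{\mathrm{ns}}^+(N)$, the same representatives $g_{x,y}$ work verbatim, now with $y$ ranging over $\mathcal{Y}_\pm(N)$ (Proposition \ref{prop:matrix_rep}). The crucial point is that the computation above is an identity of linear maps and never invokes the extra involution $S_N$, so the $T$-action is again $(x,y)\mapsto (x+N(y)^{-1},y)$ and the cusps are in bijection with $\mathcal{Y}_\pm(N)$. Thus $\varepsilon_\infty^+ = |\mathcal{Y}_\pm(N)| = |(\Z/N\Z)^\times/\{\pm 1\}|$, which equals $\varphi(N)/2$ as soon as $-1\not\equiv 1\pmod N$, i.e.\ for $N\geq 3$. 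The case $N=2$ must be treated separately: there $-1\equiv 1$, so $\mathcal{Y}_\pm(2)=\mathcal{Y}(2)$ is a singleton and there is a single cusp, which is why the stated value is $1$ rather than $\varphi(2)/2$.

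The main obstacle is the explicit $T$-action computation of the second paragraph: one must carry out the arithmetic in the rank-two algebra $A=(\Z/N\Z)[\alpha]$ carefully and verify that $g_{x,y}\circ T$ already lies in the normalized form with no correction from $C_{\mathrm{ns}}'(N)$, since it is exactly this fact that forces every cusp to have the common width $N$ (equivalently, that the non-split torus contributes no unipotent correction, so that $\varphi(N)\cdot N$ recovers the degree $N\varphi(N)$). A secondary subtlety is confirming that passing to the normalizer introduces no further identifications among the $(x,y)$ beyond shrinking the index set from $\mathcal{Y}(N)$ to $\mathcal{Y}_\pm(N)$, together with isolating the degenerate level $N=2$.
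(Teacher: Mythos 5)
Your proof is correct, but it takes a genuinely different route from the paper's. The paper works with the stabilizer rather than the orbits: it takes $g$ with $g\begin{psmallmatrix}1&a\\0&1\end{psmallmatrix}g^{-1}\in\Gamma_{\mathrm{ns}}(N)$, writes this element both via Proposition \ref{prop:matrix_rep} and as a multiplication-by-$k$ map, deduces $k=1$ and hence $a\equiv 0\pmod N$, concludes from \cite[Prop 1.37]{shimura1971} that every cusp has ramification index $N$, and then divides the degree of $X_{\mathrm{ns}}(N)\to X(1)$ (resp.\ $X_{\mathrm{ns}}^+(N)\to X(1)$) by $N$ to get $\varphi(N)$ (resp.\ $\varphi(N)/2$). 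You instead compute the right $T$-action directly on the coset representatives, and your key identity $g_{x,y}\circ T=g_{x+N(y)^{-1},\,y}$ is correct: $y^{-1}+\bar y(\alpha+x)=\bar y\bigl(\alpha+x+(y\bar y)^{-1}\bigr)$, so each $y$ indeed gives a single orbit of length $N$. Your approach buys explicitness: it exhibits the orbit structure (one cusp per norm class, each of width $N$) in one computation, without invoking Shimura's ramification proposition or the degree count, and it makes transparent why the normalizer case only shrinks the index set from $\mathcal{Y}(N)$ to $\mathcal{Y}_\pm(N)$; the paper's argument is shorter given the standard machinery. One small caveat you should make explicit at $N=2$: there Proposition \ref{prop:matrix_rep} itself degenerates --- the two maps $g_{0,1}$ and $g_{1,1}$ represent the \emph{same} coset, since $g_{1,1}=S_2\in {C_{\mathrm{ns}}^+}'(2)$ (as $-1\equiv 1$, the norm $-1$ condition is vacuous) --- so the bijection ``cusps $\leftrightarrow\mathcal{Y}_\pm(N)$'' cannot be quoted verbatim; the count $\varepsilon_\infty^+=1$ still holds because the single coset forms a single orbit, but the justification is this collapse rather than your general argument.
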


\begin{proof}
        Note that $\SL_2(\Z)_\infty = \left\langle \begin{psmallmatrix} 1 & 1 \\ 0 & 1 \end{psmallmatrix} \right\rangle$. Let $g \in \SL_2(\Z)/\Gamma_{\mathrm{ns}}(N)$ such that $\begin{psmallmatrix} 1 & a \\ 0 & 1 \end{psmallmatrix} \in g^{-1} \Gamma_{\mathrm{ns}}(N) g$, for some $a \in \Z$, which stabilizes $\infty$. The element $g\begin{psmallmatrix} 1 & a \\ 0 & 1 \end{psmallmatrix}g^{-1} \in \Gamma_{\mathrm{ns}}(N)$ can be represented in two ways: one in the manner of Proposition \ref{prop:matrix_rep}, and the other as a multiplication-by-$k$ matrix, with $k \in (\Z/N\Z)[\alpha]^\times$, giving the following system of equations:
        \begin{align*}
            y^{-1} &\equiv y^{-1} k \pmod{N}, \\
            \bar{y}(\alpha + x) &\equiv \bar{y}(\alpha + x)k \pmod{N},
        \end{align*}
        where $x \in \Z/N\Z, y \in \mathcal{Y}(N), k \in (\Z/N\Z)[\alpha]^\times,$ and $N(k) = 1$. It follows that $k = 1$ and therefore $a = 0 \pmod{N}$. By \cite[Prop 1.37]{shimura1971}, the ramification index for each cusp of $X_{\mathrm{ns}}(N)$ is $N$ and since the degree of the morphism $X_{\mathrm{ns}}(N) \rightarrow X(1)$ is $N\varphi(N)$, we have $\varepsilon_\infty = \varphi(N)$.

        The same argument for $X_{\mathrm{ns}}^+ (N)$ shows that the ramification index for each cusp is $N$ and since the degree of the morphism $X_{\mathrm{ns}}^+(N) \rightarrow X(1)$ is $N\varphi(N)/2$, we have $\varepsilon_\infty^+ = \varphi(N)/2$. When $N = 2$, $\varepsilon_
        \infty^+ = 1$.
\end{proof}

\begin{prop}\label{prop:norm_ns_nu3}
    Let $\varepsilon_3$ and $\varepsilon_3^+$ denote the number of elliptic points of order $3$ in $X_{\mathrm{ns}}(N)$ and $X_{\mathrm{ns}}^+(N)$, respectively. Then we have that
    \begin{align*}
       \varepsilon_3 &= \begin{cases}
           2^{\omega(N)} &p \equiv 2 \pmod{3}  \text{ for all } \ p\mid N, \\
           0 & \text{otherwise,}
       \end{cases} \\
       \varepsilon_3^+ &= \varepsilon_3/2.
    \end{align*}

\end{prop}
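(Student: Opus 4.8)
The plan is to follow the strategy established in Propositions \ref{prop:norm_sp_nu3} and \ref{prop:norm_sp_nu2} for the split case, now transported to the non-split setting using the matrix representation of cosets from Proposition \ref{prop:matrix_rep}. By \cite[Prop 1.37]{shimura1971}, the elliptic points of order $3$ on $X_{\mathrm{ns}}(N)$ correspond to coset representatives $\gamma$ of $\Gamma_{\mathrm{ns}}(N)$ in $\SL_2(\Z)$ such that $\gamma^{-1}\begin{psmallmatrix}0&-1\\1&-1\end{psmallmatrix}\gamma \in C_{\mathrm{ns}}(N)$. The key idea is that $\begin{psmallmatrix}0&-1\\1&-1\end{psmallmatrix}$ has order $3$ and trace $-1$, so conjugating it into $C_{\mathrm{ns}}(N) = (\Z/N\Z)[\alpha]^\times$ amounts to asking when the ring $(\Z/N\Z)[\alpha]$ contains an element of multiplicative order $3$, equivalently a primitive cube root of unity, equivalently an element of norm $1$ and trace $-1$ satisfying $t^2+t+1=0$.

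First I would use the Chinese Remainder Theorem to reduce to prime power level $N = p^r$, exactly as in the split-Cartan section, since the coset space factors as a product over primes and the counts are multiplicative. For $N = p^r$, I would write a general coset representative via Proposition \ref{prop:matrix_rep} acting on $\{1,\alpha\}$ by $1 \mapsto y^{-1}$, $\alpha \mapsto \bar y(\alpha+x)$, compute the conjugate of $\begin{psmallmatrix}0&-1\\1&-1\end{psmallmatrix}$, and impose that it lie in $C_{\mathrm{ns}}(p^r)$. Since $(\Z/p^r\Z)[\alpha]$ is the inert (unramified) quadratic extension, it is a local ring whose residue field is $\F_{p^2}$; an element of exact order $3$ exists in $\F_{p^2}^\times$ precisely when $3 \mid p^2-1$, which holds for every $p \neq 3$, while $p \equiv 1 \pmod 3$ already gives cube roots of unity inside $\F_p$. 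The refinement I expect is that the \emph{non-split} condition forces the relevant cube root of unity to generate the quadratic extension, i.e. to lie in $\F_{p^2}\setminus\F_p$, which happens exactly when $p \equiv 2 \pmod 3$ (and the $p \equiv 1$ roots instead live in the split Cartan, consistent with Proposition \ref{prop:norm_sp_nu3}). Counting these solutions should yield exactly two per such prime (the two primitive cube roots $\rho, \rho^2$), giving $2^{\omega(N)}$ when every $p \mid N$ satisfies $p \equiv 2 \pmod 3$, and $0$ otherwise. The case $p=3$ (ramified/nonexistent cube root) and $p=2$ must be checked separately and are absorbed into the stated condition.

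For $\varepsilon_3^+$, I would argue as in the split case: the degree-$2$ morphism $\phi_1 \colon X_{\mathrm{ns}}(N) \to X_{\mathrm{ns}}^+(N)$ is unramified over the elliptic points of order $3$, so each fiber has exactly two points and $\varepsilon_3^+ = \varepsilon_3/2$. Concretely, the involution $S_N$ sends a cube root of unity $\zeta$ to its conjugate $\bar\zeta = \zeta^{-1}$, interchanging the two solutions $\rho, \rho^2$ rather than fixing either, which shows the pair descends to a single point upstairs but the action on cosets pairs them two-to-one; I would verify that no elliptic point of order $3$ is fixed by the extra involution so that the map is genuinely unramified there.

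The main obstacle I anticipate is the careful bookkeeping of the conjugation computation in the basis $\{1,\alpha\}$ together with correctly identifying when a solution to the order-$3$ condition actually lands in the non-split Cartan versus being spurious (for instance, solutions forcing $p \mid$ some quantity and hence failing invertibility, as happened with the $\beta$-representatives in Propositions \ref{prop:norm_sp_nu3} and \ref{prop:norm_sp_nu2}). In particular I must confirm that lifting from the residue field $\F_{p^2}$ to $(\Z/p^r\Z)[\alpha]$ does not introduce or destroy solutions, i.e. that a cube root of unity in the residue field lifts uniquely by Hensel's lemma since $3$ is invertible mod $p$ for $p \neq 3$. Establishing that the count is exactly two (not more) per eligible prime, and that the dichotomy $p \equiv 2$ versus $p \equiv 1 \pmod 3$ exactly separates the non-split from the split cartan contributions, is the crux of the argument.
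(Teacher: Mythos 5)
Your proposal matches the paper's proof in all essentials: both invoke \cite[Prop.~1.37]{shimura1971}, use the coset representatives of Proposition \ref{prop:matrix_rep} to convert the fixed-coset condition into the existence of a non-scalar $k \in (\Z/N\Z)[\alpha]^\times$ satisfying a quadratic (the paper conjugates the order-$6$ stabilizer generator $\begin{psmallmatrix}0&-1\\1&1\end{psmallmatrix}$ and obtains $k^2-k+1\equiv 0 \pmod N$; your $t^2+t+1=0$ from the order-$3$ matrix is equivalent since $-I \in C_{\mathrm{ns}}(N)$), then count exactly two Hensel-lifted solutions per prime $p \equiv 2 \pmod 3$ via CRT, with the bijectivity of the norm map on $\mathcal{Y}(N)$ pinning down a unique $(x,y)$ per root $k$, and finally halve via unramifiedness of the degree-$2$ cover to get $\varepsilon_3^+ = \varepsilon_3/2$. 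Your residue-field framing (primitive cube roots of unity forced into $\F_{p^2}\setminus\F_p$) is precisely the paper's non-scalarity condition $k \notin (\Z/N\Z)^\times$ in different language, so this is the same argument rather than a genuinely different route.
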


\begin{proof}
    Let $\rho = e^{2\pi i/3}$ be a third root of unity. Note that $\SL_2(\Z)_\rho = \left\langle \begin{psmallmatrix}
        0 & -1 \\ 1 & 1
    \end{psmallmatrix}\right\rangle$. By \cite[Prop 1.37]{shimura1971}, the elliptic points of order 3 on $X_{\mathrm{ns}}(N)$ are the points in the inverse image $f^{-1}(\rho)$, where $f: \HH^*/\Gamma_{\mathrm{ns}}(N) \rightarrow \HH^*/\SL_2(\Z)$, with ramification index 1. By the same proposition, there exists $g \in \SL_2(\Z)/\Gamma_{\mathrm{ns}}(N)$ such that $\SL_2(\Z)_\rho \subseteq g^{-1}\Gamma_{\mathrm{ns}}(N) g$. Therefore, $\varepsilon_3$ is equal to the number of coset representatives $g$ such that
    \[
    g\begin{pmatrix}
        0 & -1 \\ 1 & 1
    \end{pmatrix} g^{-1} = \gamma, 
    \]
    for some $\gamma \in \Gamma_{\mathrm{ns}}(N)$. By Proposition \ref{prop:matrix_rep}, we can represent $\gamma$ as a multiplication-by-$k$ matrix for some $k \in (\Z/N\Z)[\alpha]^\times$, and we obtain the following system of equations:
    \begin{align}
        \bar{y}(\alpha + x) &\equiv y^{-1} k \pmod{N},\label{eqn1} \\
        \bar{y}(\alpha + x) - y^{-1} &\equiv \bar{y}(\alpha + x)k \pmod{N},\label{eqn2}
    \end{align}
    where $x \in \Z/N\Z, y \in \mathcal{Y}(N), k \in (\Z/N\Z)[\alpha]^\times$, and $N(k) = 1$.
    
    We compute the number of solutions to the above equations, following \cite[Lem 7.7]{BARAN20102753}. Suppose that $x,y,k$ satisfy the equations above. Equation (\ref{eqn1}) can be rearranged as
    \[
    N(y)\alpha + N(y)x \equiv k \pmod{N}.
    \]
    Since $N(y) \in (\Z/N\Z)^\times$ and $k \not \in (\Z/N\Z)^\times$, equation (\ref{eqn1}) can be substituted into equation (\ref{eqn2}) as follows,
    \[
    y^{-1}k - y^{-1} \equiv y^{-1}k^2 \pmod{N},
    \]
    which implies that
    \begin{equation}\label{eqn3}
    k^2 - k + 1 \equiv 0 \pmod{N}.
    \end{equation}
    We can reduce equation \eqref{eqn3} modulo the primes $p$ dividing $N$. Hensel's lemma and the Chinese Remainder Theorem imply that there are two nontrivial solutions for each prime dividing $N$. Since $\{1, \alpha \}$ is a basis for $(\Z/N\Z)[\alpha]$, there are two unique solutions $(x,y) \in \Z/p^r \Z \times \mathcal{Y}(p^r)$ for the equation 
   \[
   N(y)\alpha + N(y)x \equiv k \pmod{p^r},
   \]
   for each prime $p \equiv 2 \pmod{3}$ dividing $N$ and $p^r || N$. If there is a prime $p \not\equiv 2 \pmod{3}$, then there will be no solutions. This yields the formula for $\varepsilon_3$. Since the morphism $X_{\mathrm{ns}}(N) \rightarrow X_{\mathrm{ns}}^+(N)$ is of degree $2$ and unramified over the elliptic points of order $3$, we obtain $\varepsilon_3^+ = \varepsilon_3/2$.
    \end{proof}

\begin{prop}\label{prop:norm_ns_nu2}
    Let $\varepsilon_2$ and $\varepsilon_2^+$ denote the number of elliptic points of order $2$ in $X_{\mathrm{ns}}(N)$ and $X_{\mathrm{ns}}^+(N)$, respectively. Then we have that 
    \begin{align*}
        \varepsilon_2 &= \prod_{p\mid N} \left( 1 - \left( \frac{-1}{p} \right) \right),
\\
    \varepsilon_2^+ &= \sum_{p\mid N} \frac{1- \left( \frac{-1}{p} \right)}{2} \ + \ \left(\frac{1}{2}N \cdot \prod_{p\mid N}\left(1 + \frac{1}{p}\right) -
          \#S\right),
    \end{align*}
    where $S = \{a + b\alpha \in (\Z/N\Z)[\alpha]^\times/\pm 1: N(a+b\alpha) = -1, gcd(b,N) > 1\}$.
\end{prop}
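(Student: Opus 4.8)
The plan is to follow the template of Propositions~\ref{prop:norm_ns_infty}--\ref{prop:norm_ns_nu3}. Since $\SL_2(\Z)_i = \left\langle\begin{psmallmatrix}0&-1\\1&0\end{psmallmatrix}\right\rangle$, by \cite[Prop 1.37]{shimura1971} the quantity $\varepsilon_2$ (resp.\ $\varepsilon_2^+$) equals the number of coset representatives $g$ of $\Gamma_{\mathrm{ns}}(N)$ (resp.\ $\Gamma_{\mathrm{ns}}^+(N)$) with $g\begin{psmallmatrix}0&-1\\1&0\end{psmallmatrix}g^{-1}$ in the group. I would first settle $\varepsilon_2$. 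Representing the conjugate as a multiplication-by-$k$ map $M_k$ with $N(k)=1$ and applying the basis description of Proposition~\ref{prop:matrix_rep}, the two coordinate equations reduce---exactly as in Proposition~\ref{prop:norm_ns_nu3}, now with $\begin{psmallmatrix}0&-1\\1&0\end{psmallmatrix}$ in place of $\begin{psmallmatrix}0&-1\\1&1\end{psmallmatrix}$---to
\[
k = N(y)(\alpha + x), \qquad k^2 + 1 \equiv 0 \pmod{N}.
\]
Because $M_k$ is conjugate to $\begin{psmallmatrix}0&-1\\1&0\end{psmallmatrix}$ only when its trace vanishes, the scalar roots $k \equiv \pm i$ are discarded, and a nonscalar square root of $-1$ survives in the nonsplit Cartan modulo $p$ exactly when $i \notin \F_p$, i.e.\ when $p \equiv 3 \pmod 4$. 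Hensel's lemma and the Chinese Remainder Theorem then produce two admissible $k$ for each such prime and none otherwise, giving $\varepsilon_2 = \prod_{p\mid N}\left(1 - \left(\frac{-1}{p}\right)\right)$.

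For $\varepsilon_2^+$ I would split the count over the two pieces of $C_{\mathrm{ns}}^+{}'(N)$. When $g\begin{psmallmatrix}0&-1\\1&0\end{psmallmatrix}g^{-1}$ falls in the Cartan part $\{M_k : N(k)=1\}$, the computation above applies verbatim; here the deck involution $S_N$ sends $M_k \mapsto M_{\bar k}$, and since $N(k)=1$ and $k^2=-1$ force $\bar k = k^{-1} = -k \neq k$, it permutes the admissible roots $k \leftrightarrow -k$ \emph{freely}. Thus these points descend $2$-to-$1$ and unramified through $\phi_1$, contributing the first summand, namely half of the order-$2$ points already found on $X_{\mathrm{ns}}(N)$. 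The genuinely new phenomenon is the reflection part $\{M_k S_N : N(k) = -1\}$: for any such $k$ one has $(M_k S_N)^2 = M_{k\bar k} = M_{N(k)} = -I$, so \emph{every} norm-$(-1)$ reflection already has order $2$ in $\PSL_2$ and is automatically $\SL_2$-conjugate to $\begin{psmallmatrix}0&-1\\1&0\end{psmallmatrix}$, its trace being forced to $0$.

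To count the reflection part I would set $g\begin{psmallmatrix}0&-1\\1&0\end{psmallmatrix}g^{-1} = M_k S_N$ and again use Proposition~\ref{prop:matrix_rep}, now with $y \in \mathcal{Y}_\pm(N)$. The coordinate equations yield $k = \bar y^2(\alpha + x)$ subject to the single scalar constraint $N(\alpha + x) = -N(y)^{-2}$. Since $N(\alpha + x) = x^2 + ux + v$ is a fixed binary quadratic in $x$, for each $y$ the admissible $x$ are its roots modulo $N$; summing over $y \in \mathcal{Y}_\pm(N)$ then enumerates precisely the norm-$(-1)$ classes $k \in (\Z/N\Z)[\alpha]^\times/\pm 1$ whose $\alpha$-coordinate is a unit. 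The kernel of the norm map has order $N\prod_{p\mid N}(1+1/p)$, so there are $\tfrac12 N\prod_{p\mid N}(1+1/p)$ norm-$(-1)$ classes in all, and deleting those whose $\alpha$-coordinate is a non-unit removes exactly the set $S$, leaving the second summand $\tfrac12 N\prod_{p\mid N}(1+1/p) - \#S$.

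The step I expect to be the main obstacle is this reflection count. Unlike $\varepsilon_\infty^+$ and $\varepsilon_3^+$, where $\phi_1$ is unramified over the relevant points and one simply halves, here $S_N$ creates new fixed points, so $\varepsilon_2^+$ is not $\varepsilon_2/2$ and the two contributions must be assembled independently. More seriously, $C_{\mathrm{ns}}^+(N)$ is generated by the Cartan together with a \emph{single} involution acting on all primes at once, so it is not a direct product over the prime-power factors of $N$ and the invariant fails to be multiplicative; one must argue inside the global ring $(\Z/N\Z)[\alpha]$ throughout. The delicate bookkeeping is then twofold: tracking the $\pm$-identification built into $\mathcal{Y}_\pm(N)$ so that each class is counted once, and correctly isolating the degenerate locus $S$ of norm-$(-1)$ reflections with non-invertible $\alpha$-coordinate, which do not arise from an actual coset representative. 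I would verify the outcome on small composite levels---e.g.\ checking that the resulting genus of $X_{\mathrm{ns}}^+(N)$ comes out to an integer---to catch off-by-one errors in the handling of $S$ and of the Cartan term.
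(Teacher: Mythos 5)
Your proposal reproduces the paper's proof essentially step for step: the same coset equations from Proposition \ref{prop:matrix_rep} reduce $\varepsilon_2$ to nonscalar solutions of $k^2 \equiv -1 \pmod{N}$ (forcing $p \equiv 3 \pmod 4$ for all $p \mid N$, with two solutions per prime by Hensel and CRT), and for $\varepsilon_2^+$ both you and the paper split $C_{\mathrm{ns}}^+{}'(N)$ into the norm-$(+1)$ part, handled by halving through the degree-$2$ unramified cover, and the norm-$(-1)$ reflection part, which is reduced to counting classes in $(\Z/N\Z)[\alpha]^\times/\pm 1$ of norm $-1$ with unit $\alpha$-coefficient, yielding $\tfrac{1}{2}N\prod_{p\mid N}\left(1+\tfrac{1}{p}\right) - \#S$. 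The only differences are cosmetic: you express the Cartan contribution as $\varepsilon_2/2 = \tfrac{1}{2}\prod_{p\mid N}\left(1-\left(\tfrac{-1}{p}\right)\right)$, which is exactly what the paper's own halving argument produces even though the proposition displays it as $\sum_{p\mid N}\tfrac{1}{2}\left(1-\left(\tfrac{-1}{p}\right)\right)$ (the two expressions coincide at prime-power levels), and you defer the explicit evaluation of $\#S$, which the paper carries out by inclusion--exclusion via the maps $\Theta_{\mathcal{P}}$ but which is not needed for the statement since $\#S$ is left symbolic there.
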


\begin{proof}
    Note that $\SL_2(\Z)_i = \left\langle \begin{psmallmatrix}
        0 & -1 \\ 1 & 0
    \end{psmallmatrix}\right\rangle$ and the elliptic points of order 2 on $X_{\mathrm{ns}}(N)$ are the points in the inverse image $f^{-1}(i)$, where $f: \HH^*/\Gamma_{\mathrm{ns}}(N) \rightarrow \HH^*/\SL_2(\Z)$, with ramification index 1. Therefore, by \cite[Prop 1.37]{shimura1971}, $\varepsilon_2$ is equal to the number of coset representatives $g \in \SL_2(\Z)/\Gamma_{\mathrm{ns}}(N)$ such that the equality
    \[
    g\begin{pmatrix}
        0 & -1 \\ 1 & 0 
    \end{pmatrix} g^{-1}= \gamma,
    \]
    \noindent holds for some $\gamma \in \Gamma_{\mathrm{ns}}(N)$. Considering the above equality as linear maps using Proposition \ref{prop:matrix_rep}, we have the equations:
    \begin{align}
        \bar{y}(\alpha + x) &\equiv y^{-1}k \pmod{N}, \label{eqn4}\\
        -y^{-1} &\equiv \bar{y}(\alpha + x)k \pmod{N}, \label{eqn5}
    \end{align}
    where $x \in \Z/N\Z, y \in \mathcal{Y}(N), k \in (\Z/N\Z)[\alpha]^\times,$ and $N(k) = 1$. 

We compute the number of solutions to the above equations, following \cite[Lem 7.8]{BARAN20102753}. Suppose that $x,y,k$ satisfy the above equations. Equation \eqref{eqn4} can be rearranged and also substituted into equation \eqref{eqn5} to get the following:
\begin{align}
N(y)\alpha + N(y)x &\equiv k \pmod{N}, \label{eqn6} \\
-y^{-1} &\equiv y^{-1}k^2 \pmod{N}. \label{eqn7}
\end{align}
Since $N(y) \in (\Z/N\Z)^\times$, equation \eqref{eqn6} implies that $k \not \in \Z/N\Z$. From equation \eqref{eqn7}, we obtain
\[
-1 \equiv k^2 \pmod{N}.
\]
By the Chinese Remainder Theorem and Hensel's lemma, since $k$ is not an element of $(\Z/N\Z)^\times$ and hence of $(\Z/p\Z)^\times$ for any prime $p$ dividing $N$, we have $p \equiv 3 \pmod{4}$ for all primes $p$ dividing $N$, and there are two solutions for the system of equations. This proves the $\varepsilon_2$ part of the proposition.

Recall that,
\[
    C_{\mathrm{ns}}^+ {} '(N) = \{ M_y: y \in (\Z/N\Z)[\alpha]^\times, N(\alpha) = 1 \} \cup \{ M_y \circ S_N: y \in (\Z/N\Z)[\alpha]^\times, N(\alpha) = -1 \}.
\]
Therefore, for the normalizer of non-split Cartan subgroup, the same argument as above gives rise to two systems of equations, corresponding to the norm $+1$ and $-1$ elements:
\begin{align*}
    \bar{y}(\alpha + x) &\equiv y^{-1}k \pmod{N},  & \bar{y}(\alpha + x) &\equiv \bar{y}^{-1}k \pmod{N}, \\
    -y^{-1} &\equiv \bar{y}(\alpha + x)k \pmod{N}, &-y^{-1} &\equiv y(\bar{\alpha} + x)k \pmod{N}
\end{align*}
where $x \in \Z/N\Z, y \in \mathcal{Y}_{\pm}(N), k \in (\Z/N\Z)[\alpha]^\times$, and $N(k) = +1$ and $-1$, respectively.
Notice that the system of equations on the left are the same as the systems of equations in the $\varepsilon_2$ case. Since the covering map $X_{\mathrm{ns}}^+(N) \rightarrow X_{\mathrm{ns}}(N)$ is of degree $2$ and is unramified over these points, the norm $+1$ elements contribute $\sum_{p\mid N} \frac{1- \left( \frac{-1}{p} \right)}{2}$ to $\varepsilon_2^+$.

For the norm $-1$ elements, since $N(k) = -1$, the equations above on the right hand side are conjugate. Therefore, we want to know the number of solutions $(x,y) \in \Z/N\Z \times \mathcal{Y}_{\pm}(N)$ for which there exists a $k\in (\Z/N\Z)[\alpha]^\times$ such that the following equation holds
\[
\bar{y}(\alpha + x)\equiv \bar{y}^{-2}k \pmod{N}.
\]

Taking norms and rearranging terms, we obtain
\[
N(\alpha+x) \equiv -N(y)^{-2}\pmod{N}.
\]
Since the norm map $N: \mathcal{Y}_{\pm}(N) \rightarrow (\Z/N\Z)^\times/\pm 1$ is an isomorphism, we need to count the number of $x \in \Z/N\Z$ and $v \in (\Z/N\Z)^\times/\pm 1$ such that the following equality holds
\[
    N(\alpha+x) \equiv -v^{2}\pmod{N}.
\]

Now, we write $h = \frac{\alpha+x}{v} \in (\Z/N\Z)[\alpha]^\times/\pm1$, with $N(h) \equiv -1 \pmod{N}$. The problem at hand is to find how many such $h$'s exist. Note that the norm map
\[
N: (\Z/N\Z)[\alpha]^\times/\pm1 \rightarrow (\Z/N\Z)^\times
\]
is surjective and its kernel has order 
\[
\left(\frac{1}{2}N^2 \cdot \prod_{p\mid N}\left( 1-\frac{1}{p^2}\right)\right)/\varphi(N) = \frac{1}{2}\cdot \prod_{p\mid N} p^{r-1}(p+1).
\]

Therefore, the number of elements in $(\Z/N\Z)[\alpha]^\times/\pm1$ of norm $-1$ is equal to $\frac{1}{2}\prod_{p\mid N} p^{r-1}(p+1)$. Note that this count also includes elements whose $\alpha$-coefficient is not coprime to $N$. We exclude these elements in the next few paragraphs.

Let $\mathcal{P} \in \mathbb{P}(N):= \mathrm{PowerSet}(\{p: p \text{ prime, } p \mid N \})$ and define
\begin{align*}
\Theta_{\mathcal{P}}: (\Z/N\Z) [\alpha]^\times /\pm 1 &\rightarrow (\Z/N\Z)^\times \times \prod_{p \in \mathcal{P}} \frac{(\Z/p\Z) [\alpha]^\times}{(\Z/p\Z)^\times},\\
a + b\alpha &\mapsto (N(a+b\alpha), (a_p + b_p \alpha)_p),
\end{align*}
where $a_p,b_p$ are the representatives of $a,b$ under the reduction modulo $p$ and projection maps.

Then the image of $\Theta_{\mathcal{P}}$ is given by
\[
\operatorname{Im}(\Theta_{\mathcal{P}}) = \{ (b,(c_p)_p): b \equiv N(c_p)v_p^2 \pmod{p} \text{ for some } v_p \in (\Z/p\Z)^\times, \text{ for all } p \in \mathcal{P} \},
\]
which has cardinality $\varphi(N)\prod_{p \in \mathcal{P}} \frac{p+1}{2} \cdot 2^\delta$, where $\delta = 1$ when $2 \in \mathcal{P}$ and $ \delta = 0$ otherwise. The cardinality of the kernel can be calculated as 
\begin{align*}
\#\operatorname{ker}(\Theta_{\mathcal{P}}) &= \# C_{\mathrm{ns}}^+ (N) / \# \operatorname{Im}(\Theta_{\mathcal{P}}), \\
&= \frac{1}{2}N^2\left(\prod_{p\mid N} \left(1- \frac{1}{p^2}\right)\right)/\left(\varphi(N) \prod_{p\in \mathcal{P}} \frac{p+1}{2} \cdot 2^\delta\right), \\
&= 2^{\# \mathcal{P} -1 - \delta} N \cdot \prod_{p\mid N}\left(1 + \frac{1}{p}\right) / \prod_{p \in \mathcal{P}} (p+1).
\end{align*}

We are interested in $$\Theta^{-1}_{\mathcal{P}}(-1,(1,\ldots,1)) = \{a + b\alpha : N(a+b\alpha) = -1, \ \operatorname{gcd}(b,p) >1 , \text{ for all } p \in \mathcal{P} \}.$$

Note that this set has the same cardinality as the kernel, and if there exists $p \equiv 3 \pmod{4} \in \mathcal{P}$, then the preimage will be empty since the congruence condition 
\[
-1 \equiv 1\cdot v_p^2 \pmod{p}\]
does not have a solution when $p\equiv 3 \pmod{4}$. To summarize,
\[
\# \Theta_{\mathcal{P}}^{-1}(-1,(1,\ldots,1)) = \begin{cases} 
0 & \exists p \in \mathcal{P}, p \equiv 3 \pmod{4}, \\
2^{\# \mathcal{P} -1 - \delta} N \cdot \prod_{p\mid N}\left(1 + \frac{1}{p}\right) / \prod_{p \in \mathcal{P}} (p+1) & \text{otherwise.}
\end{cases}
\]

By an inclusion-exclusion argument, one could compute the cardinality of the set 
\[
S:=\{a + b\alpha: N(a+b\alpha) = -1, \ \operatorname{gcd}(b,N) > 1\}.
\]
Let $\mathbb{P}(N)_k := \{\mathcal{P} \in \mathbb{P}(N): \# \mathcal{P} = k \}$. Then,
\begin{align*}
    \# S = \sum_{k=1}^{\omega(N)} (-1)^{k+1} \left(\sum_{\mathcal{P} \in \mathbb{P}(N)_k} \#\Theta_{\mathcal{P}}^{-1}(-1,\underbrace{(1,\ldots,1)}_{k})\right)
\end{align*}
and we obtain
\[
    \varepsilon_2^+ = \underbrace{\sum_{p\mid N} \frac{1- \left( \frac{-1}{p} \right)}{2}
}_{\text{norm} = +1} \ + \ \underbrace{\frac{1}{2}N\cdot \prod_{p\mid N}\left(1 + \frac{1}{p}\right) -
          \#S}_{\text{norm} = -1}.
\]
\end{proof}

\begin{remark}
    Propositions \ref{prop:norm_ns_infty}, \ref{prop:norm_ns_nu3}, and \ref{prop:norm_ns_nu2} recover the formulas from \cite{BARAN20102753} when $N = p^r$. 
\end{remark}

Since the index, number of elliptic points and cusps are multiplicative in the level,  one can produce formulas for $\varepsilon_2^*$, $\varepsilon_3^*$, and $\varepsilon_\infty^*$ of $X_{\mathrm{ns}}^*(N)$ from prime power levels (see \cite{DLM2022}).

\begin{corollary}\label{cor:invariants_ns_star}
    Let $\varepsilon_2^*$, $\varepsilon_3^*$, $\varepsilon_\infty^*$ denote the number of elliptic points of orders $2$, $3$, and cusps in $X_{\mathrm{ns}}^*(N)$, respectively. Then we have that
    \begin{align*}
    \varepsilon_\infty^* &= \begin{cases}
        \varphi(N)/2^{\omega(N)-1} &\text{if } 2||N, \\
        \varphi(N)/2^{\omega(N)} &\text{otherwise,}
    \end{cases} \\
    \varepsilon_3^* &= \begin{cases}
    1 &  p \equiv 2\pmod{3} \text{ for all } p \mid N,  \\
        0 &\text{otherwise,}
    \end{cases} \\
    \varepsilon_2^* &= 2^{\nu_2(N)-1} \cdot \prod_{\substack{p\mid N, \\ p \equiv 1 \bmod 4}} \frac{p^{\nu_p(N)-1}(p-1)}{2} \cdot \prod_{\substack{p\mid N, \\ p \equiv 3 \bmod 4}} \left(1 + \frac{p^{\nu_p(N)-1}(p+1)}{2} \right).
    \end{align*}
\end{corollary}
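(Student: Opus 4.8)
The plan is to reduce everything to the prime-power case and then invoke multiplicativity. The crucial observation is that $C_{\mathrm{ns}}^*(N)$ is obtained by adjoining one involution for \emph{each} prime dividing $N$, so under the Chinese Remainder Theorem isomorphism $\GL_2(\Z/N\Z) \cong \prod_{p^r \| N} \GL_2(\Z/p^r\Z)$ it decomposes as $C_{\mathrm{ns}}^*(N) \cong \prod_{p^r \| N} C_{\mathrm{ns}}^+(p^r)$. In particular, at a prime-power level the $*$-normalizer and the $+$-normalizer coincide, so the prime-power invariants of $X_{\mathrm{ns}}^*(p^r)$ are exactly the quantities $\varepsilon_\infty^+, \varepsilon_3^+, \varepsilon_2^+$ already computed in Propositions \ref{prop:norm_ns_infty}, \ref{prop:norm_ns_nu3}, and \ref{prop:norm_ns_nu2}. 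Since the $\PSL_2$-index, the number of elliptic points, and the number of cusps are multiplicative in the level (\cite[Equation 3.14]{DLM2022}), each invariant of $X_{\mathrm{ns}}^*(N)$ factors as the product over $p^r \| N$ of the corresponding prime-power invariant, and it remains to multiply the prime-power formulas and simplify.

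For $\varepsilon_\infty^*$, Proposition \ref{prop:norm_ns_infty} gives $\varepsilon_\infty^+(p^r) = \varphi(p^r)/2$ whenever $p^r > 2$ and $\varepsilon_\infty^+(2) = 1$. Taking the product over $p^r \| N$ and using $\varphi(N) = \prod_{p^r \| N} \varphi(p^r)$ yields $\varphi(N)/2^{\omega(N)}$ when every prime-power factor exceeds $2$; when $2 \| N$ the factor at $2$ equals $1$ rather than $\varphi(2)/2 = 1/2$, which removes one power of $2$ from the denominator and gives $\varphi(N)/2^{\omega(N)-1}$, as stated. For $\varepsilon_3^*$, Proposition \ref{prop:norm_ns_nu3} gives $\varepsilon_3^+(p^r) = 1$ when $p \equiv 2 \pmod 3$ and $0$ otherwise, so the product is $1$ precisely when every prime dividing $N$ is $\equiv 2 \pmod 3$, and $0$ otherwise.

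The substantive work is $\varepsilon_2^*$, for which I would extract the prime-power value $\varepsilon_2^+(p^r)$ from Proposition \ref{prop:norm_ns_nu2} by specializing the composite formula to a single prime. At prime-power level the norm-$(+1)$ contribution is the single term $\frac{1}{2}\bigl(1 - \bigl(\frac{-1}{p}\bigr)\bigr)$, which is $1$ for $p \equiv 3 \pmod 4$ and $0$ for $p \equiv 1 \pmod 4$; and the set $S$ collapses, being empty for $p \equiv 3 \pmod 4$ and of size $p^{r-1}$ for $p \equiv 1 \pmod 4$ (directly from the formula for $\#\Theta_{\mathcal{P}}^{-1}$ with $\mathcal{P} = \{p\}$ and $\delta = 0$). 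Combining the norm-$(+1)$ term with the norm-$(-1)$ count $\frac{1}{2}p^{r-1}(p+1) - \#S$ gives $\varepsilon_2^+(p^r) = 1 + \frac{1}{2}p^{r-1}(p+1)$ for $p \equiv 3 \pmod 4$ and $\varepsilon_2^+(p^r) = \frac{1}{2}p^{r-1}(p-1)$ for $p \equiv 1 \pmod 4$, while the prime $p = 2$ must be treated separately and yields the factor $2^{\nu_2(N)-1}$. Multiplying these three types of factors over $p^r \| N$ produces exactly the stated formula. The main obstacle is bookkeeping the $p = 2$ case, since the Legendre symbol $\bigl(\frac{-1}{2}\bigr)$ and the quantity $\delta$ in the definition of $\#\Theta_{\mathcal{P}}^{-1}$ behave anomalously there; I would therefore verify the factor $2^{\nu_2(N)-1}$ by returning to the direct coset count underlying Proposition \ref{prop:norm_ns_nu2} rather than blindly specializing the odd-prime formula.
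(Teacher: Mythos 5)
Your proposal is correct and takes essentially the same route as the paper: the paper obtains the corollary precisely by noting that $C_{\mathrm{ns}}^*(N)$ decomposes under the Chinese Remainder Theorem as the product of the local normalizers $C_{\mathrm{ns}}^+(p^r)$, so that the index, elliptic-point counts, and cusp counts are multiplicative in the level (citing \cite{DLM2022}), and then multiplying the prime-power values coming from Propositions \ref{prop:norm_ns_infty}, \ref{prop:norm_ns_nu3}, and \ref{prop:norm_ns_nu2}. Your specializations are also the correct ones, e.g.\ $\#S = p^{r-1}$ for $p \equiv 1 \pmod 4$ via $\#\Theta_{\{p\}}^{-1}(-1,(1))$ with $\delta = 0$, giving $\varepsilon_2^+(p^r) = \tfrac{1}{2}p^{r-1}(p-1)$ there and $1 + \tfrac{1}{2}p^{r-1}(p+1)$ for $p \equiv 3 \pmod 4$, exactly as the stated formula requires.
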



\section{Modular covers}\label{Xarith1}

An inclusion of open subgroups $H \leq G \leq \GL_2(\widehat{\Z})$ induces a covering map of modular curves $X_H \rightarrow X_G$. Modular covers are defined to be such morphisms. We can borrow tools from ramification theory and Riemann-Hurwitz to find the invariants of genus formulas of these curves. In this section, we describe the genus invariants of the modular curves $X_{\mathrm{arith},1}(M,MN)$ and $X_{\mathrm{arith},\pm 1}(M,MN)$, which are described in Section \ref{subsec:Xarith1}. 

The index of $\{ \begin{psmallmatrix}
    1 & M\ast \\ 0 & \ast
\end{psmallmatrix}\}$ in $\GL_2(\Z/MN\Z)$ is $M^3N^2 \prod_{p\mid MN} (1 - \frac{1}{p^2})$ and since $-I \not \in \{ \begin{psmallmatrix}
    1 & M\ast \\ 0 & \ast
\end{psmallmatrix}\}$, the $\PSL_2$-index of the corresponding subgroup is $\frac{1}{2} M^3N^2\prod_{p\mid MN} (1 - \frac{1}{p^2})$. Note that $X_{\mathrm{arith},1}(M,MN) \rightarrow X_1(MN)$ is a covering map of degree $M$. In particular, when $M=1$, the corresponding modular curve $X_{\mathrm{arith},1}(M,MN)$ is equal to $X_1(M)$, and when $N=1$, the corresponding modular curve is equal to $X_{\mathrm{arith}}(N)$.

\begin{prop}\label{prop:arith1_nu2_nu3}
    For $M,N >1$, there are no elliptic points of orders $2$ and $3$ for $\Gamma_{\mathrm{arith},1}(M,MN)$.
\end{prop}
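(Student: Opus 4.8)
The plan is to show that the relevant conjugacy conditions for elliptic points simply cannot be satisfied in the group $H := \begin{psmallmatrix} 1 & M* \\ 0 & * \end{psmallmatrix} \subset \GL_2(\Z/MN\Z)$ when $M, N > 1$. Following the strategy established in the split and nonsplit Cartan sections, recall from the background that $\varepsilon_2(\Gamma_H)$ counts right cosets of $\Gamma_H$ in $\SL_2(\Z/MN\Z)$ fixed by the right action of $\begin{psmallmatrix} 0 & -1 \\ 1 & 0 \end{psmallmatrix}$, and $\varepsilon_3(\Gamma_H)$ counts those fixed by $\begin{psmallmatrix} 0 & -1 \\ 1 & -1 \end{psmallmatrix}$. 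By the usual reformulation via \cite[Prop 1.37]{shimura1971}, it suffices to show there is no coset representative $\gamma$ with $\gamma^{-1} S \gamma \in H$, where $S$ is the respective order-$2$ or order-$3$ element of $\SL_2(\Z)$.

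First I would reduce to a congruence condition on the bottom-left entry. The defining feature of $H$ is that every element has the form $\begin{psmallmatrix} 1 & b \\ 0 & d \end{psmallmatrix}$ with $b \equiv 0 \pmod{M}$ and $d \in (\Z/MN\Z)^\times$; in particular every element of $H$ is upper triangular with top-left entry exactly $1$. So the question becomes: can a conjugate of an elliptic element be upper triangular with top-left entry $1$? An upper-triangular matrix has its two eigenvalues on the diagonal, hence here one eigenvalue is $1$. But conjugation preserves the characteristic polynomial, so I would instead examine the trace and determinant of $S$ directly. The elliptic element of order $2$ has trace $0$ and determinant $1$; the element of order $3$ has trace $1$ (for $\begin{psmallmatrix} 0 & -1 \\ 1 & -1 \end{psmallmatrix}$, trace $-1$, but its square/appropriate representative has trace $1$) and determinant $1$. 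For $\gamma^{-1}S\gamma$ to lie in $H$ it must be upper triangular with diagonal $(1, d)$, forcing its characteristic polynomial to be $(X-1)(X-d) = X^2 - (1+d)X + d$ with $d = \det = 1$, i.e. $X^2 - (1+d)X + 1$ and simultaneously the required trace. This pins down $d$ and forces a congruence: $1 + d \equiv 0$ (order $2$ case) or $1 + d \equiv 1$, i.e. $d \equiv 0$ (order $3$ case, after normalizing the trace), each modulo $MN$.

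The main obstacle, and the crux of the argument, is converting the eigenvalue/trace obstruction into a genuine nonexistence statement that uses $M, N > 1$. The cleanest route is: since $d \in (\Z/MN\Z)^\times$, the congruence $d \equiv 0 \pmod{MN}$ is impossible, immediately killing $\varepsilon_3$; and for the order-$2$ case the condition $d \equiv -1$ combined with $\det = d = 1$ forces $-1 \equiv 1 \pmod{MN}$, i.e. $2 \equiv 0 \pmod{MN}$, which fails once $MN > 2$. One must handle the degenerate small cases separately, but the hypothesis $M, N > 1$ gives $MN \geq 4$, comfortably excluding them. I would then confirm that having eigenvalue $1$ with the correct determinant is not merely necessary but that the obstruction is already visible at this level, so no finer coset computation (as in Lemma \ref{lem:Csp_cosets}) is needed.

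Alternatively, and perhaps more transparently, I would argue reduction-theoretically: a fixed point of the coset action under $S$ corresponds to $\gamma S \gamma^{-1} \in H$ having $1$ as an eigenvalue with eigenvector a specific coordinate vector, i.e. $S$ fixes a line over $\Z/MN\Z$ with the eigenvalue being exactly $1$ on the first basis vector. The elliptic elements of orders $2$ and $3$ have eigenvalues that are primitive fourth and sixth (or cube) roots of unity respectively; for these to equal $1$ modulo a prime $p \mid MN$ one needs $p \mid 2$ or $p \mid 3$ in a way that the exact top-left value $1$ cannot accommodate for both primes dividing $M$ and the primes dividing $N$ simultaneously. I expect the first, characteristic-polynomial argument to be the shortest and least error-prone, so I would present that as the main line and relegate the degenerate cases to a one-line check using $MN \geq 4$.
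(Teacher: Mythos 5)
Your proof is correct in substance, but it takes a genuinely different route from the paper's. The paper disposes of the proposition in three lines by a covering argument: since $X_{\mathrm{arith},1}(M,MN) \rightarrow X_1(MN)$ is a covering map and any elliptic point of a subgroup must lie over an elliptic point of the supergroup, the known fact that $\Gamma_1(MN)$ has no elliptic points of order $2$ or $3$ once $MN \geq 4$ (which holds because $M,N>1$) finishes the proof. You instead argue directly inside $\SL_2(\Z/MN\Z)$: every element of $H\cap\SL_2(\Z/MN\Z)$ is upper triangular with diagonal $(1,1)$, so any conjugate of an elliptic element landing in $\Gamma_{\mathrm{arith},1}(M,MN)$ would have trace $\equiv \pm 2 \pmod{MN}$, whereas the order-$2$ and order-$3$ elements have traces $0$ and $\pm 1$, and no congruence matches when $MN\geq 4$. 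Note that your argument only uses the inclusion $\Gamma_{\mathrm{arith},1}(M,MN)\subseteq\Gamma_1(MN)$, so it is essentially a from-scratch reproof of the very $\Gamma_1$ fact the paper cites: the two proofs rest on the same trace obstruction, with the paper's version shorter and modular, and yours self-contained. Two bookkeeping slips to repair in your write-up: the square of $\begin{psmallmatrix}0&-1\\1&-1\end{psmallmatrix}$ has trace $-1$, not $1$ (it is the negative $-S$ that has trace $1$); and since the coset-fixing condition actually involves $\bar{H} = \pm H\cap\SL_2(\Z/MN\Z)$ rather than $H$ alone, you must check both signs in each case --- trace $0$ versus $\pm 2$ for order $2$, and trace $\pm 1$ versus $\pm 2$ for order $3$ --- all of which do fail for $MN\geq 4$, so the conclusion stands unchanged.
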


\begin{proof}
    The elliptic points of orders $2$ and $3$ for $\SL_2(\Z)$ are $\SL_2(\Z)i$ and $\SL_2(\Z)e^{2\pi i/3}$, respectively. Any elliptic points of order $h 
    \in \{2,3\}$ of a congruence subgroup $\Gamma \subseteq \SL_2(\Z)$ must map to one of the above two points. Since $X_{\mathrm{arith},1}(M,MN) \rightarrow X_1(MN)$ is a covering map, and there are no elliptic points of order $h$ for $\Gamma_1(MN)$ when $M,N >1$, there are no elliptic points of order $h$ for $\Gamma_{\mathrm{arith},1}(M,MN)$.
\end{proof}

\begin{prop}\label{prop:xarith_nuinf}
    For $M,N >1$, let $\varepsilon_\infty$ denote the number of cusps in $X_{\mathrm{arith},1}(M,MN)$. Then we have that:
    \[
    \varepsilon_\infty = \begin{cases}
        1 & \text{if } M,N=1, \\
        2 & \text{if } M=1, \ N=2, \\
        3 & \text{if } M=1, \ N=4, \\
        \frac{1}{2}\cdot \sum\limits_{d \mid MN } \varphi \left(\frac{MN}{d}\right) \varphi(d) \gcd \left(M,\frac{MN}{d}\right) & \text{otherwise.}
    \end{cases}    
    \]
\end{prop}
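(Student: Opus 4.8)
The plan is to realize $X_{\mathrm{arith},1}(M,MN)$ as a degree-$M$ cover of $X_1(MN)$ and to count cusps by determining how each cusp of $X_1(MN)$ splits. First I would make the group explicit: intersecting $\begin{psmallmatrix}1 & M\ast\\0&\ast\end{psmallmatrix}$ with $\SL_2$ forces the lower-right entry to equal $1$, so $\Gamma_{\mathrm{arith},1}(M,MN)$ is exactly the subgroup of $\Gamma_1(MN)$ whose upper-right entry $\beta$ satisfies $\beta\equiv 0\pmod M$. The map $\gamma\mapsto\beta(\gamma)\bmod M$ is a homomorphism $\Gamma_1(MN)\to\Z/M\Z$ (since the diagonal entries are $\equiv 1\pmod M$), surjective with kernel $\Gamma_{\mathrm{arith},1}(M,MN)$; hence the cover has degree $M$, and $\Gamma_{\mathrm{arith},1}(M,MN)\backslash\Gamma_1(MN)\cong\Z/M\Z$ with coset representatives $T^0,\dots,T^{M-1}$ for $T=\begin{psmallmatrix}1&1\\0&1\end{psmallmatrix}$. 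This recovers the degree-$M$ statement recorded before the proposition.

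Next I would count cusps above a fixed cusp of $X_1(MN)$. Represent it by a coprime pair $\tfrac ac$; after replacing $a$ by $a+jc$ I may assume $\gcd(a,MN)=1$. The cusps of $X_{\mathrm{arith},1}(M,MN)$ lying over it are the orbits of the stabilizer of $\tfrac ac$ acting on $\Z/M\Z\cong\Gamma_{\mathrm{arith},1}(M,MN)\backslash\Gamma_1(MN)$. Writing $g=\begin{psmallmatrix}a&b\\c&d'\end{psmallmatrix}\in\SL_2(\Z)$ with $g\infty=\tfrac ac$, the stabilizer in $\Gamma_1(MN)$ is generated by $gT^{w}g^{-1}$ of width $w=MN/\gcd(c,MN)$, and a direct computation gives $gT^w g^{-1}=\begin{psmallmatrix}1-acw & a^2w\\-c^2w & 1+acw\end{psmallmatrix}$, so its image under $\beta\bmod M$ is $a^2w\equiv w\pmod M$ (using $\gcd(a,M)=1$). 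Thus the generator acts on $\Z/M\Z$ by translation by $w$, with $\gcd(w,M)=\gcd(MN/d,\,M)$ orbits, where $d=\gcd(c,MN)$. Equivalently, and avoiding widths entirely, two coprime pairs define the same cusp of $X_{\mathrm{arith},1}(M,MN)$ iff $\binom{a'}{c'}\equiv\pm\binom{a+Mjc}{c}\pmod{MN}$, so the number of classes over a fixed denominator type gets multiplied, relative to $X_1(MN)$, by $\gcd(Mc,MN)/\gcd(c,MN)=\gcd(M,MN/d)$.

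Then I would assemble the sum. Grouping the cusps of $X_1(MN)$ by the invariant $d=\gcd(c,MN)$, the classical count gives $\tfrac12\varphi(d)\varphi(MN/d)$ cusps of $X_1(MN)$ in each group (for $MN\notin\{1,2,4\}$); multiplying by the splitting factor $\gcd(M,MN/d)$ and summing over $d\mid MN$ yields $\varepsilon_\infty=\tfrac12\sum_{d\mid MN}\varphi(MN/d)\varphi(d)\gcd(M,MN/d)$. Finally I would record the exceptional rows: for $M=1$ the curve is $X_1(N)$, and the three listed values are just the classical cusp numbers of $X(1)$, $X_1(2)$, and $X_1(4)$.

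I expect the main obstacle to be the $-I$ / irregular-cusp phenomena at small level. Both the per-group count $\tfrac12\varphi(d)\varphi(MN/d)$ and the width $w=MN/\gcd(c,MN)$ fail precisely when $MN\in\{1,2,4\}$, where $\Gamma_1(MN)$ acquires an irregular cusp and elements of the form $-gT^{w'}g^{-1}$ enter the stabilizer; this is exactly what forces the listed special cases when $M=1$. The delicate part is to show that for $M>1$ the refinement factor $\gcd(M,MN/d)$ interacts with the irregular cusp so that the single closed formula still holds (for example at $M=N=2$, where the irregular cusp at $d=2$ must split into $\gcd(2,2)=2$ regular cusps), which I would settle by redoing the stabilizer/translation computation at irregular cusps with the sign included and checking that the orbit count is unchanged.
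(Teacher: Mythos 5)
Your proposal is correct in substance but organized around a genuinely different mechanism than the paper's proof. The paper makes no use of the cover $X_{\mathrm{arith},1}(M,MN)\to X_1(MN)$: it works directly from the coset decomposition $\Gamma_{\mathrm{arith},1}(M,MN)=\bigcup_{j=0}^{N-1}\Gamma(MN)\begin{psmallmatrix}1&Mj\\0&1\end{psmallmatrix}$ to get the equivalence $\binom{a'}{c'}\equiv\pm\binom{a+Mjc}{c}\pmod{MN}$ --- exactly the relation you record in your ``equivalently, avoiding widths entirely'' aside --- and then counts, for each $d=\gcd(c,MN)$, the $\varphi(MN/d)$ choices of $c$ and the $\varphi(d)\gcd(M,MN/d)$ admissible values of $a$ modulo $\gcd(Mc,MN)$, halving once for the sign. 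Your main line (the homomorphism $\gamma\mapsto\beta(\gamma)\bmod M$ with kernel $\Gamma_{\mathrm{arith},1}(M,MN)$, the stabilizer generator $gT^wg^{-1}$ with $w=MN/\gcd(c,MN)$ acting on $\Z/M\Z$ by translation by $a^2w\equiv w$, hence $\gcd(MN/d,M)$ orbits) is a correct fiber-by-fiber refinement: it explains conceptually where the factor $\gcd(M,MN/d)$ comes from, which the paper's count produces but does not interpret. The price is that your route inherits the irregular-cusp pathology of $X_1(4)$, which the paper's uniform count sidesteps (its halving is legitimate for all $M,N>1$, since a coincidence $\binom{a}{c}\equiv-\binom{a+Mjc}{c}\pmod{MN}$ forces $MN\mid 4$, and at $(M,N)=(2,2)$ it fails because $M$ is even).

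One concrete claim in your final paragraph is wrong, and as stated it would contradict the formula: at $(M,N)=(2,2)$ the irregular cusp $1/2$ of $X_1(4)$ does \emph{not} split into $\gcd(2,2)=2$ cusps; it is totally ramified in the degree-$2$ cover, with exactly one cusp of $X_{\mathrm{arith},1}(2,4)$ above it. Indeed, with $g=\begin{psmallmatrix}1&0\\2&1\end{psmallmatrix}$ the full stabilizer of $1/2$ in $\Gamma_1(4)$ is generated by the sign element $-gTg^{-1}=\begin{psmallmatrix}1&-1\\4&-3\end{psmallmatrix}$, whose upper-right entry is $-1\equiv 1\pmod 2$, so it translates $\Z/2\Z$ by $1$: one orbit, not the two orbits your width-$2$ generator predicts. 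Including the sign thus \emph{changes} the orbit count from $2$ to $1$, contrary to your expectation that it would be ``unchanged''; had the cusp split, the total would be $2+2+2=6$ before correcting, or $2+2+1=5$ counting only the irregular fiber wrongly, rather than the correct $2+1+1=4$. The closed formula nevertheless survives because two factor-of-two errors cancel: the naive downstairs term $\frac12\varphi(2)\varphi(2)=\frac12$ undercounts the single irregular cusp by a factor of $2$, while the naive splitting factor $\gcd(2,2)=2$ overcounts its fiber by a factor of $2$, so $\frac12\varphi(2)\varphi(2)\gcd(2,2)=1$ is the right contribution. Your proposed remedy --- redoing the stabilizer computation with the sign included --- is exactly the right move and would expose this; you only need to replace the predicted outcome. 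For $MN>4$ there are no irregular elements ($-gT^hg^{-1}\in\Gamma_1(MN)$ forces $MN\mid 4$), so the rest of your argument stands as written.
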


\begin{proof}
    We follow a similar approach to that in \cite[Section 3.8]{Diamond2005}. Let $s'= a'/c', s=a/c \in \Q \cup \infty$. The coset decomposition $\Gamma_{\mathrm{arith},1}(M,MN) = \bigcup_{j=0}^{N-1} \Gamma(MN) \begin{psmallmatrix}
        1 & Mj \\ 0 & 1
    \end{psmallmatrix}$ gives the second equivalence: 
\begin{align*}
    \Gamma_{\mathrm{arith},1}(M,MN)s' = \Gamma_{\mathrm{arith},1}(M,MN)s &\iff s' \in \Gamma_{\mathrm{arith},1}(M,MN)s, \\
    &\iff s' \in \Gamma (MN) \begin{psmallmatrix}
        1 & Mj \\ 0 & 1
    \end{psmallmatrix} \text{ for some }j, \\
    &\iff \begin{psmallmatrix}
        a' \\ c'
    \end{psmallmatrix} \equiv \pm \begin{psmallmatrix}
        a + Mcj \\ c
    \end{psmallmatrix} \pmod{MN} \text{ for some }j.
\end{align*}    
This implies that the top row $a$ is determined modulo $\gcd(Mc,MN)$ and since this is a cusp, we also have $\gcd(a,c, MN) = 1$. Let $d := \gcd(c,MN)$, there are $\varphi(MN/d)$ elements $c$ such that $0 \leq c\leq MN-1$ and $\gcd(c,MN) = d$. The set 
\[\{ a: 0 \leq a \leq \gcd(Mc,MN), \gcd(a,c,MN) = 1\}  = \{ a: 0 \leq a \leq \gcd(Md,MN), \gcd(a,d) = 1\}\]
has size $\varphi(d)\gcd(M,MN/d)$. Summing over divisors of $MN$ yields the result.
\end{proof}

\begin{remark} \label{rem:arith1}
    Specializing to either $M=1$ or $N=1$ recovers the genus formulas for $X_1(N)$ and $X(M)$, respectively. 
\end{remark}



\begin{table}[h!]
    \renewcommand*{\arraystretch}{1.85}
    \caption{Invariants of genus formula}
    \label{tab:all_invariants}
    \begin{center}
    \begin{tabular}{c|c|c}
        \toprule
        $X_H$ & $i$ & $\varepsilon_2$ \\
        \midrule\midrule 
        $X_0(N)$ & $N \cdot \prod\limits_{\substack{p\mid N}}\left(1 + \frac{1}{p}\right)$ & $\begin{array}{l}0 \quad \text{if } N \equiv 0 \bmod 4 \\ \prod\limits_{\substack{p\mid N}}\left(1 + \left(\frac{-1}{p}\right)\right) \quad \text{otherwise}\end{array}$ \\
        \hline
        $\begin{array}{c}X_1(N), \\ X_{\pm 1}(N)\end{array}$ & $\begin{array}{l}1 \quad \text{if } N=1 \\ 3 \quad \text{if } N=2 \\ \frac{N^2}{2}\cdot \prod\limits_{\substack{p\mid N}}\left(1-\frac{1}{p^2}\right) \quad \text{otherwise}\end{array}$ & $\begin{array}{l}1 \quad \text{if } N=1,2 \\ 0 \quad \text{otherwise}\end{array}$ \\ 
        \hline
        $\begin{array}{c}X(N), \\ X_{\mathrm{arith}}(N)\end{array}$ & $\begin{array}{l}6 \quad \text{if } N=2 \\ \frac{N^3}{2} \cdot \prod\limits_{\substack{p\mid N}}\left(1 - \frac{1}{p^2}\right) \quad \text{if } N>2\end{array}$ & $\begin{array}{l}1 \quad \text{if } N=1 \\ 0 \quad \text{if } N>1\end{array}$ \\ 
        \hline
        $\begin{array}{c}X_{\mathrm{arith},1}(M,MN), \\ X_{\mathrm{arith},\pm 1}(M,MN)\end{array}$ & $\begin{array}{l}1 \quad \text{if } M,N=1 \\ 3 \quad \text{if } M=1, N=2 \\ 6 \quad \text{if } M = 2, N = 1 \\ \frac{M^3N^2}{2} \cdot \prod_{p|MN} \left(1 - \frac{1}{p^2}\right) \quad \text{otherwise}\end{array}$ & $\begin{array}{l}1 \quad \text{if } M,N=1 \\ 1 \quad \text{if } M=2, N=1 \\ 0 \quad \text{otherwise}\end{array}$ \\
        \hline
        $X_{\mathrm{sp}}(N)$ & $N^2 \cdot \prod\limits_{\substack{p\mid N}}\left(1 + \frac{1}{p}\right)$ & $\begin{array}{l}0 \quad \text{if } 2 \mid N, \\  \prod\limits_{\substack{p\mid N}} \left(1 + \left(\frac{-1}{p}\right) \right) \quad \text{otherwise}\end{array}$ \\
        \hline
        $X_{\mathrm{sp}}^+(N)$ & $\frac{N^2}{2} \cdot \prod\limits_{\substack{p\mid N}}\left(1 + \frac{1}{p}\right)$  & See Proposition \ref{prop:norm_sp_nu2}  \\ 
        \hline
        $X_{\mathrm{sp}}^*(N)$ & $\frac{N^2}{2^{\omega(N)}} \cdot \prod\limits_{\substack{p\mid N}}\left(1 + \frac{1}{p}\right)$ & See Corollary \ref{cor:invariants_sp_star} \\ 
        \hline
        $X_{\mathrm{ns}}(N)$ & $N\cdot \varphi(N)$ & $\begin{array}{l}\prod\limits_{\substack{p\mid N}} \left( 1- \left( \frac{-1}{p} \right) \right)
\end{array}$ \\
        \hline
        $X_{\mathrm{ns}}^+(N)$ & $\frac{1}{2}\cdot N\cdot \varphi(N)$ & See Proposition \ref{prop:norm_ns_nu2} \\
        \hline
        $X_{\mathrm{ns}}^*(N)$ & $\frac{N}{2^{\omega(N)}}\cdot \varphi(N)$ & See Corollary \ref{cor:invariants_ns_star} \\  
        \hline
        $X_{S_4}(p)$ & $\frac{1}{24}\cdot p(p^2 - 1)$ & $\frac{1}{4} \cdot \left( p - \left( \frac{-1}{p}\right) \right)$\\ 
        \bottomrule 
        \multicolumn{3}{c}{Table 1 continued on the next page \ldots} 
    \end{tabular}
    \end{center}
\end{table}

\begin{table}[h!]
    \renewcommand*{\arraystretch}{1.85}
    \begin{center}
    \begin{tabular}{c|c|c}
    \multicolumn{3}{c}{Table 1 continued \ldots} \\
        \toprule
        $X_H$ & $\varepsilon_3$ & $\varepsilon_\infty$ \\
        \midrule\midrule 
        $X_0(N)$ & $\begin{array}{l}0 \quad \text{if } N \equiv 0 \bmod 9 \\ \prod\limits_{\substack{p\mid N}}\left(1 + \left(\frac{-3}{p}\right)\right) \quad \text{otherwise}\end{array}$ & $\sum\limits_{\substack{d\mid N, \\ d>0}} \varphi\left(\left(d,\frac{N}{d}\right)\right)$ \\
        \hline
        $\begin{array}{c}X_1(N), \\ X_{\pm 1}(N)\end{array}$ & $\begin{array}{l}1 \quad \text{if } N=1,3 \\ 0 \quad \text{otherwise}\end{array}$ & $\begin{array}{l}1 \quad \text{if } N=1 \\ 2 \quad \text{if } N=2 \\ 3 \quad \text{if } N=4 \\ \frac{1}{2}\cdot \sum\limits_{\substack{d\mid N}}\varphi(d)\varphi\left(\frac{N}{d}\right) \quad \text{otherwise}\end{array}$ \\ 
        \hline
        $\begin{array}{c}X(N), \\ X_{\mathrm{arith}}(N)\end{array}$ & $\begin{array}{l}1 \quad \text{if } N=1 \\ 0 \quad \text{if } N>1\end{array}$ & $\begin{array}{l}1 \quad \text{if } N=1 \\ \frac{i}{N} \quad \text{if } N>1\end{array}$ \\ 
        \hline
        $\begin{array}{c}X_{\mathrm{arith},1}(M,MN), \\ X_{\mathrm{arith},\pm 1}(M,MN)\end{array}$ & $\begin{array}{l}1 \quad \text{if } M=1 \text{ and } N=1,3  \\ 0 \quad \text{otherwise}\end{array}$ & See Proposition \ref{prop:xarith_nuinf} \\
        \hline
        $X_{\mathrm{sp}}(N)$ & $\begin{array}{l}0 \quad \text{if } 2 \mid N \text{ or } 3 \mid N, \\  \prod\limits_{\substack{p\mid N}}  \left(1 + \left(\frac{-3}{p}\right) \right) \quad \text{otherwise}\end{array}$ & $N \cdot \prod\limits_{\substack{p\mid N}}\left(1 + \frac{1}{p}\right)$ \\
        \hline
        $X_{\mathrm{sp}}^+(N)$ & $\begin{array}{l}0 \quad \text{if } 2 \mid N \text{ or } 3 \mid N, \\  \frac{1}{2} \cdot \prod\limits_{\substack{p\mid N}} \left(1 + \left(\frac{-3}{p}\right) \right) \quad \text{otherwise}\end{array}$ & $\frac{N}{2} \cdot \prod\limits_{\substack{p\mid N}}\left(1 + \frac{1}{p}\right)$ \\ 
        \hline
        $X_{\mathrm{sp}}^*(N)$ & See Corollary \ref{cor:invariants_sp_star} & See Corollary \ref{cor:invariants_sp_star} \\ 
        \hline
        $X_{\mathrm{ns}}(N)$ & $\begin{array}{l}           2^{\omega(N)} \quad \text{if } p \equiv 2 \pmod{3}  \text{ for all } \ p\mid N, \\
           0 \quad \text{otherwise,}
       \end{array}$ & $\varphi(N)$ \\
        \hline
        $X_{\mathrm{ns}}^+(N)$ & $\begin{array}{l}           2^{\omega(N)-1} \quad \text{if } p \equiv 2 \pmod{3}  \text{ for all } \ p\mid N, \\
           0 \quad \text{otherwise,}
       \end{array}$ & $\begin{array}{l}1 \quad \text{if } N = 2, \\   \frac{1}{2} \cdot \varphi(N) \quad \text{otherwise}\end{array}$

        \\
        \hline
        $X_{\mathrm{ns}}^*(N)$ & See Corollary \ref{cor:invariants_ns_star} & See Corollary \ref{cor:invariants_ns_star} \\  
        \hline
        $X_{S_4}(p)$ & $\frac{1}{3} \cdot \left( p - \left( \frac{-3}{p}\right) \right)$ & $\frac{1}{24}\cdot (p^2 - 1)$ \\ 
         \bottomrule
    \end{tabular}
    \end{center}
    \label{tab:my_label}
\end{table}

\bibliography{bibliography}

\begin{thebibliography}{{LMF}24}

\bibitem[Bar10]{BARAN20102753}
Burcu Baran.
\newblock Normalizers of non-split {C}artan subgroups, modular curves, and the class number one problem.
\newblock {\em Journal of Number Theory}, 130(12):2753--2772, 2010.

\bibitem[DLM22]{DLM2022}
Valerio Dose, Guido Lido, and Pietro Mercuri.
\newblock Automorphisms of {C}artan modular curves of prime and composite level.
\newblock {\em Algebra {$\&$} Number Theory}, 16(6):1423--1461, 2022.

\bibitem[DMS19]{dose2019double}
Valerio Dose, Pietro Mercuri, and Claudio Stirpe.
\newblock Double covers of {C}artan modular curves.
\newblock {\em Journal of Number Theory}, 195:96--114, 2019.

\bibitem[DS05]{Diamond2005}
Fred Diamond and Jerry Shurman.
\newblock {\em A First Course in Modular Forms}, pages 371--411.
\newblock Springer New York, New York, NY, 2005.

\bibitem[DS17]{derickx2017torsion}
Maarten Derickx and Andrew Sutherland.
\newblock Torsion subgroups of elliptic curves over quintic and sextic number fields.
\newblock {\em Proceedings of the American Mathematical Society}, 145(10):4233--4245, 2017.

\bibitem[Lig77]{Ligozat}
G{\'e}rard Ligozat.
\newblock Courbes modulaires de niveau 11.
\newblock In Jean-Pierre Serre and Don~Bernard Zagier, editors, {\em Modular Functions of one Variable V}, pages 149--237, Berlin, Heidelberg, 1977. Springer Berlin Heidelberg.

\bibitem[{LMF}24]{lmfdb}
The {LMFDB Collaboration}.
\newblock The {L}-functions and modular forms database.
\newblock \url{https://www.lmfdb.org}, 2024.
\newblock [Online; accessed 12 June 2024].

\bibitem[Maz77]{mazur77rationalpoints}
B.~Mazur.
\newblock Rational points on modular curves.
\newblock In Jean-Pierre Serre and Don~Bernard Zagier, editors, {\em Modular Functions of one Variable V}, pages 107--148, Berlin, Heidelberg, 1977. Springer Berlin Heidelberg.

\bibitem[RSZB22]{ell_adic}
Jeremy Rouse, Andrew~V. Sutherland, and David Zureick-Brown.
\newblock {$\ell$}-adic images of {G}alois for elliptic curves over {$\mathbb{Q}$} (and an appendix with john voight).
\newblock {\em Forum of Mathematics, Sigma}, 10, 2022.

\bibitem[RZB15]{rouse2015elliptic}
Jeremy Rouse and David Zureick-Brown.
\newblock Elliptic curves over {$\mathbb{Q}$} and 2-adic images of {G}alois.
\newblock {\em Research in Number Theory}, 1(1):12, 2015.

\bibitem[SD73]{swinntertondyer73}
H.~P.~F. Swinnerton-Dyer.
\newblock On {{$\ell$}}-adic representations and congruences for coefficients of modular forms.
\newblock In Willem Kuijk and Jean-Pierre Serre, editors, {\em Modular Functions of One Variable III}, pages 1--55, Berlin, Heidelberg, 1973. Springer Berlin Heidelberg.

\bibitem[Ser72]{serre1972proprietes}
Jean-Pierre Serre.
\newblock Propri{\'e}t{\'e}s galoisiennes des points d’ordre fini des courbes elliptiques.
\newblock {\em Invent. math}, 15:259--331, 1972.

\bibitem[Shi71]{shimura1971}
Goro Shimura.
\newblock {\em Introduction to the Arithmetic Theory of Automorphic Functions}.
\newblock Princeton University Press, 1971.

\end{thebibliography}
\bibliographystyle{alpha}


\end{document}